\numberwithin{equation}{section}
\newtheorem{theorem}{Theorem}[section]
\newtheorem{lemma}[theorem]{Lemma}
\newtheorem{proposition}[theorem]{Proposition}
\theoremstyle{definition}
\newtheorem{definition}[theorem]{Definition}
\newtheorem{example}[theorem]{Exemple}
\newtheorem{remark}[theorem]{Remark}
\theoremstyle{plain}% default
\def\KK{\ensuremath\mathbb{K}}
\def\BB{\ensuremath\mathbb{B}}
\def\CC{\ensuremath{\mathbb{C}}}
\def\NN{\ensuremath{\mathbb{N}}}
\def\RR{\ensuremath{\mathbb{R}}}
\def\ZZ{\ensuremath{\mathbb{Z}}}
\date{}
\begin{document}
\pagestyle{plain}
	\title{The adiabatic groupoid and the Higson-Roe exact sequence}
	
	\author
	{Vito Felice Zenobi}
	\maketitle

		\begin{abstract}
Let $\widetilde{X}$ be a smooth Riemannian manifold equipped with a proper, free, isometric and cocompact action of a discrete group $\Gamma$.
In this paper we prove that the analytic surgery exact sequence of Higson-Roe for $\widetilde{X}$  is isomorphic to the exact sequence associated to the adiabatic deformation of the Lie groupoid $\widetilde{X}\times_\Gamma\widetilde{X}$. We then generalize this result to the context of smoothly stratified manifolds.
Finally, we show, by means of the aforementioned isomorphism, that the $\varrho$-classes associated to a metric with positive scalar curvature defined in \cite{PS1} corresponds to the $\varrho$-classes defined in \cite{Zadiabatic}.
		\end{abstract}
		
\section*{Introduction}

Let $\widetilde{X}$ be a proper metric space equipped with a  proper and co-compact action of a  discrete group $\Gamma$. 
 In \cite{Roeassembly} Roe  proved that the assembly map can be realized as the boundary map in K-theory associated to the short  exact sequence of C*-algebras 
\begin{equation}\label{1}
 \xymatrix{0\ar[r] & C^*(\widetilde{X})^\Gamma\ar[r]&  D^*(\widetilde{X})^\Gamma\ar[r] &  D^*(\widetilde{X})^\Gamma /C^*(\widetilde{X})^\Gamma\ar[r] & 0}
\end{equation}
we will call it the \emph{coarse} assembly map.
In their seminal papers \cite{HR1,HR2,HR3},  Higson and  Roe constructed a map from the surgery exact sequence  of Browder, Novikov, Sullivan and Wall
\begin{equation}\label{2}
\xymatrix{\cdots\ar[r] & L_*(\ZZ\Gamma)\ar[r]& S_*(X)\ar[r] &  \mathcal{N}_*(X)\ar[r] & \cdots}
\end{equation}
to 
 \begin{equation}\label{3}
 \xymatrix{\cdots\ar[r] & K_*(C^*(\widetilde{X})^\Gamma)\ar[r]&  K_*(D^*(\widetilde{X})^\Gamma)\ar[r] &  K_*(D^*(\widetilde{X})^\Gamma /C^*(\widetilde{X})^\Gamma)\ar[r] & \cdots}
 \end{equation}
which was called the \emph{analytic surgery exact sequence}, in analogy with its topological counterpart \eqref{2}.

In \cite{PS1} Piazza and Schick use index theoretic techniques  to map the Stolz' positive scalar curvature sequence to \eqref{3}. In \cite{PS2} they then revisit the mapping from \eqref{2} to \eqref{3}.
The main results of those papers are the definition of certain K-theoretic secondary invariants and the proof of the \emph{delocalized APS index theorem}.

The papers of Higson and Roe stimulated a fervent activity resulting in a number of different realizations of the analytic surgery exact sequence. In with follows we list a few of the main contributions. 
In \cite{zenobi}, the author of the present paper  uses Lipschitz structures  to generalize the results of \cite{PS2} from the setting of smooth manifolds to the one of topological manifolds. In the same paper a new exact sequence is introduced, isomorphic to \eqref{3}.  This new realization was then used for proving product formulas for secondary invariants. The group $\mathcal{S}_*^\Gamma(\widetilde{X})$, which corresponds to $K_*(D^*(\widetilde{X})^\Gamma)$, is given roughly speaking by the homotopy fiber of the Kasparov assembly map. Let us point out that if $\Gamma$ is a topological groupoid acting on a topological space $X$ and $A$ is a $\Gamma$-algebra, one also has a more general definition of $\mathcal{S}_*^\Gamma(\widetilde{X})$ which fits into the following exact sequence
\begin{equation}\label{4}
\xymatrix{\cdots\ar[r] & KK_*(\CC,C_0(0,1)\otimes A\rtimes\Gamma)\ar[r]& \mathcal{S}_*^\Gamma(\widetilde{X};A) \ar[r] & KK_*^\Gamma(C_0(\widetilde{X}), A)\ar[r]&\cdots}
\end{equation}
involving the assembly map for groupoid action with coefficients in the C*-algebra $A$, see \cite{Legall, JLT}.
In the two recent works  \cite{BR1, BR2}, Benameur and Roy  introduce the Higson-Roe exact sequence for the action of a transformation groupoid. 

In \cite{Yu} Yu introduces the so-called \emph{localization algebras} and an other assembly-type map (the \emph{local index map}), which is proved to be equivalent to the coarse assembly map, see also \cite{QR}.
Using the localization algebras many authors have contributed to the study of K-theoretic analytic invariants associated to  surgery theory and  metrics of positive scalar curvature, see \cite{XY1,XY2, WXY, Z}.

In \cite{DG1,DG2,DG3} Deeley and Goffeng produce a geometrical version of the analytic surgery exact sequence in the spirit of Baum’s geometric K-homology theory.

\bigskip

A further way of implementing the index map was given in \cite{connes-book} by Connes, where he used the so-called \emph{tangent groupoid}, also called by now \emph{adiabatic groupoid}.
In \cite{Zadiabatic}, the author of the present paper used  the group $K_*(C^*_r(G_{ad}^{[0,1)}))$ appearing in the exact sequence 
		\begin{equation}\label{5}
		\xymatrix{\dots\ar[r] & K_*(C^*_r(G\times(0,1)))\ar[r] & K_*(C^*_r(G_{ad}^{[0,1)}))\ar[r]^{\mathrm{ev}_0} & K_*(C^*_r(\mathfrak{A}G))\ar[r]& \dots}
		\end{equation}
	as a receptacle for K-theoretic secondary invariants. Here
$G$ is a Lie groupoid, $\mathfrak{A}G$ is its Lie algebroid and $G_{ad}^{[0,1)}$ is its adiabatic deformation.
The  case of a smooth manifold $\widetilde{X}$ with a proper and free $\Gamma$ action is realized by the particular groupoid $\widetilde{X}\times_\Gamma\widetilde{X}$. But this approach can also be applied to
other geometric situation encoded by a general Lie groupoid, such as foliations.

The main result of this paper is the following.

\begin{theorem}\label{0.1}
	Let $\widetilde{X}$ be a smooth Riemannian manifold equipped with a proper, free, isometric and cocompact action of a discrete group $\Gamma$. Let $G$ be the Lie groupoid $\widetilde{X}\times_\Gamma\widetilde{X}$.
	Then there exists a commutative diagram 
	\begin{equation}
	\xymatrix{\cdots\ar[r]&K_*(C_r^*(G)\otimes C_0(0,1))\ar[r]\ar[d]& K_*(C^*_r(G_{ad}^{[0,1)}))\ar[r]\ar[d] & K_*(C^*_r(TX))\ar[r]\ar[d]&\dots\\
		\cdots\ar[r]&K_{*+1}(C^*(\widetilde{X})^\Gamma)\ar[r] & K_{*+1}(D^*(\widetilde{X})^\Gamma)\ar[r]& K_{*+1}(D^*(\widetilde{X})^\Gamma/C^*(\widetilde{X})^\Gamma)\ar[r]&\cdots}
	\end{equation}
	such that the vertical arrows are isomorphisms.
\end{theorem}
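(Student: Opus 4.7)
The strategy is to realise both exact sequences as the K-theory long exact sequences of a morphism of short exact sequences of C*-algebras, and then invoke the five lemma. Concretely, my plan is as follows.

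First, I would construct a $*$-homomorphism from $C^*_r(G_{ad}^{[0,1)})$ to an appropriate mapping cone associated to the Higson-Roe extension $0\to C^*(\widetilde{X})^\Gamma\to D^*(\widetilde{X})^\Gamma\to D^*(\widetilde{X})^\Gamma/C^*(\widetilde{X})^\Gamma\to 0$. The starting observation is that elements of $C^*_r(G_{ad}^{[0,1)})$ are completions of continuous families of $\Gamma$-invariant kernels $(k_t)_{t\in[0,1)}$ on the adiabatic deformation. After the standard rescaling by the parameter $t$ in the normal direction, each $k_t$ with $t>0$ defines a bounded operator on $L^2(\widetilde{X})$ whose propagation tends to zero as $t\to 0$, and its limit at $t=0$ is a Fourier multiplier along the fibres of $T\widetilde{X}$, i.e.\ a principal symbol. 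Since $\Gamma$-equivariance and locally compact support are preserved, the resulting family lies in a mapping cone for the pair $\bigl(D^*(\widetilde{X})^\Gamma,\,C^*(\widetilde{X})^\Gamma\bigr)$, which is Puppe-equivalent to the suspension of the Higson-Roe sequence.

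This assignment sends the adiabatic short exact sequence to a mapping-cone version of the suspended Higson-Roe extension, thereby inducing the three vertical arrows of the diagram. Commutativity follows from naturality of K-theory boundary maps together with the fact, going back to Connes, that the boundary of the adiabatic sequence coincides with the analytic index map; in the situation at hand this is precisely the Higson-Roe assembly map.

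Next I would verify that the two outer vertical maps are isomorphisms. The left one is the classical identification $K_{*+1}(C^*_r(\widetilde{X}\times_\Gamma\widetilde{X}))\cong K_{*+1}(C^*_r(\Gamma))\cong K_{*+1}(C^*(\widetilde{X})^\Gamma)$, coming from Morita equivalence and Roe's theorem. The right one identifies $K_*(C^*_r(TX))\cong K^*(T^*X)$ with $K_{*+1}\bigl(D^*(\widetilde{X})^\Gamma/C^*(\widetilde{X})^\Gamma\bigr)\cong K^\Gamma_*(\widetilde{X})$ via Kasparov's Poincaré duality, implemented at the cycle level by principal symbols of pseudodifferential operators. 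Once these two are established, the five lemma forces the middle map to be an isomorphism as well.

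The main obstacle is the careful construction of the middle $*$-homomorphism: one must show that the rescaled families extend from a natural Schwartz-type dense subalgebra on the adiabatic groupoid to the full reduced C*-algebra, with values in the correct mapping cone, and that at the level of K-theory the induced boundary matches the Higson-Roe boundary on the nose rather than only up to a sign or automorphism. This compatibility is most cleanly obtained by going through an intermediate realisation, such as the localization algebra picture of \cite{Yu} or the KK-theoretic framework of \cite{Legall, JLT}, in which both boundary maps admit a common description as a Kasparov product with a Dirac-type class.
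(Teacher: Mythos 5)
Your overall skeleton --- realize both sequences as the K-theory sequences of a morphism of extensions, identify the left map via Morita equivalence, the right map via Paschke duality versus Poincar\'e duality, and conclude by the five lemma --- is exactly the paper's strategy. But the central construction, the middle vertical arrow, has two concrete problems as you describe it. First, a degree count: the theorem asserts $K_*(C^*_r(G_{ad}^{[0,1)}))\cong K_{*+1}(D^*(\widetilde{X})^\Gamma)$, so a suspension (or a Thom isomorphism) must enter somewhere, and your proposed receptacle is the wrong one: the mapping cone of the ideal inclusion $C^*(\widetilde{X})^\Gamma\hookrightarrow D^*(\widetilde{X})^\Gamma$ has, by excision, the K-theory of $S(D^*(\widetilde{X})^\Gamma/C^*(\widetilde{X})^\Gamma)$, not of $SD^*(\widetilde{X})^\Gamma$. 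The paper resolves this either by crossing the adiabatic groupoid with $\RR^*_+$ (the gauge adiabatic groupoid $G_{ga}^{[0,1)}$, compared with $G_{ad}^{[0,1)}$ via the Connes--Thom isomorphism), which produces an honest $*$-homomorphism $\iota\colon C^*_r(G_{ga}^{[0,1)})\to D^*(\widetilde{X})^\Gamma$ on $\mathcal{H}=L^2(\widetilde{X}\times(0,1))$, or, in its second approach, by an explicit suspension and a zig-zag through mapping cones of maps with K-trivial domains, namely $L^\infty(X)\otimes\BB(H)\to D^*(\widetilde{X})^\Gamma$ and $C(X)\xrightarrow{\mathfrak{m}}\Psi^0_\Gamma(\widetilde{X})$; in neither case is there a direct $*$-homomorphism out of $C^*_r(G_{ad}^{[0,1)})$ itself. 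Second, your rescaled family does not land where you want it: for $a\in C^*_r(G_{ad}^{[0,1)})$ the commutators $[k_t,f]$, $f\in C_0(\widetilde{X})$, are compact for each $t$ and vanish in norm as $t\to0$, but a $C_0((0,1),\KK(L^2(\widetilde{X})))$-valued family acts on $L^2(\widetilde{X}\times(0,1))$ as a multiplication operator in the $t$-variable and is \emph{not} a compact operator there, so pseudolocality in the sense of $D^*$ fails. The crossed product by $\RR^*_+$ is precisely what repairs this, turning $C^*_r(G)\otimes C_0(0,1)$ into $C^*_r(G)\otimes\KK(L^2(0,1))\cong C^*(\widetilde{X})^\Gamma$.

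The other substantive point you defer to ``naturality'' is in fact where the work lies: one must show that the right-hand vertical map, composed with Paschke duality, \emph{is} Poincar\'e duality, not merely that some index-theoretic square commutes up to an automorphism. The paper does this by writing Paschke duality as the asymptotic morphism $\mu^*(u)$ pulled back from the Calkin extension, computing $(\iota\otimes\mathrm{id}_{C(X)})^*\mu^*(u)=\bar\mu^*\iota^*(u)$, and identifying $\iota^*(u)$ with the boundary $\partial_X$ of the adiabatic sequence for the pair groupoid $X\times X$, so that the composite is the Dirac element $D_X=[m]\otimes\partial_X$ of Kasparov's duality. Your suggestion of routing through localization algebras or the KK-theoretic framework could in principle supply this identification, but as written it is an appeal to a compatibility that is exactly the nontrivial content of the theorem.
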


 Using this result, we will see that the $\varrho$-classes  associated to a metric with positive scalar curvature defined in \cite{PS1} and \cite{Zadiabatic} correspond to each other through the middle vertical isomorphism.

In \cite{PZ} the methods from \cite{Zadiabatic} are used to define secondary invariants associated to metrics with positive scalar curvature on stratified manifolds and other singular situations such as foliations which degenerate on the boundary.
 In order to deal with the singularities, a slightly different exact sequence of groupoid C*-algebras is used. The proof of Theorem \ref{0.1} can be easily adapted to the context of stratified manifolds and we obtain the following result.
\begin{theorem}\label{0.2}
	Let ${}^\mathrm{S}X$ be a Thom-Mather stratified space  with fundamental group $\Gamma$ and let ${}^\mathrm{S}\widetilde{X}$ be its universal covering with the associated $\Gamma$-equivariant stratification. Let the regular part of ${}^\mathrm{S}\widetilde{X}$ be equipped with an incomplete iterated edge metric, then there exists a commutative diagram
		\begin{equation}
		\xymatrix{\cdots\ar[r]&K_*(C_r^*(G_{|\mathring{X}}\otimes C_0(0,1))\ar[r]\ar[d]& K_*(C^*_r(G_{ad}^{[0,1)}))\ar[r]\ar[d] & K_*(C^*_r(T_{\varphi}^{NC}X))\ar[r]\ar[d]&\dots\\
			\cdots\ar[r]&K_{*+1}(C^*(\widetilde{X})^\Gamma)\ar[r] & K_{*+1}(D^*(\widetilde{X})^\Gamma)\ar[r]& K_{*+1}(D^*(\widetilde{X})^\Gamma/C^*(\widetilde{X})^\Gamma)\ar[r]&\cdots}
		\end{equation}
		such that the vertical arrows are isomorphisms.
\end{theorem}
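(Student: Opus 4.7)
The strategy is a direct adaptation of the proof of Theorem \ref{0.1} to the setting of smoothly stratified spaces endowed with an incomplete iterated edge metric. As in the smooth case, the goal is to construct a $*$-homomorphism $\Phi$ from $C^*_r(G_{ad}^{[0,1)})$ into $D^*(\widetilde{X})^\Gamma$ which restricts to a map $C^*_r(G|_{\mathring{X}}) \otimes C_0(0,1) \to C^*(\widetilde{X})^\Gamma$ on the ideal and descends to a map $C^*_r(T_\varphi^{NC}X) \to D^*(\widetilde{X})^\Gamma/C^*(\widetilde{X})^\Gamma$ on the quotient. Once such a $\Phi$ is produced, the commutative diagram of exact sequences follows at once and the theorem reduces, via the five-lemma, to checking that two out of the three induced vertical maps are isomorphisms in $K$-theory.

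I would first fix the edge Lie groupoid $G$ used in \cite{PZ}, adapted to the iterated edge structure of ${}^\mathrm{S}\widetilde{X}$, together with its adiabatic deformation $G_{ad}^{[0,1)}$ and its noncommutative tangent bundle $T_\varphi^{NC}X$. The representation $\Phi$ is then defined on compactly supported smooth sections of $G_{ad}^{[0,1)}$ by integrating their Schwartz kernels against the natural family of half-densities on the source fibres and averaging over the $\Gamma$-action on $\widetilde{X}$. Exactly as in the smooth case, for $t\in(0,1)$ the kernel of $\Phi(a)$ is supported in a neighbourhood of the diagonal of $\mathring{\widetilde{X}}\times\mathring{\widetilde{X}}$ whose propagation shrinks to zero as $t\to 0$, so that $\Phi(a)$ is a $\Gamma$-equivariant, finite-propagation, pseudolocal operator. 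The isomorphism on the ideal follows from the Morita equivalence between $C^*_r(G|_{\mathring{X}})$ and $C^*(\widetilde{X})^\Gamma$ (implemented by a $\Gamma$-invariant cut-off function on $\mathring{\widetilde{X}}$), tensored with $C_0(0,1)$; the middle isomorphism is then obtained by checking that the adiabatic index construction reproduces the Higson-Roe realization of the symbol class.

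The only genuinely new difficulty, and thus the main obstacle, is to verify that $\Phi(a)$ has finite propagation and the correct local compactness with respect to the \emph{incomplete} iterated edge metric on $\mathring{\widetilde{X}}$, rather than the complete metric naturally carried by the edge groupoid $G$. This forces one to control the behaviour of Schwartz kernels along all singular strata through the $\varphi$-double-space compactification underlying the construction of $T_\varphi^{NC}X$ in \cite{PZ}: one must show that the kernels of elements of $C^*_r(G_{ad}^{[0,1)})$ vanish at the relevant boundary hypersurfaces quickly enough to produce bounded operators with the required coarse-geometric properties, and that the evaluation at $t=0$ picks out precisely the symbol algebra $C^*_r(T_\varphi^{NC}X)$. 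Once these kernel estimates near the strata are established, the remainder of the argument, namely the Morita equivalence on the ideal, the identification of the middle term, and the five-lemma, proceeds formally exactly as in the smooth case of Theorem \ref{0.1}.
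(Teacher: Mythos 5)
Your overall architecture (construct a morphism of short exact sequences of C*-algebras, then apply the five lemma) matches the paper's, but there are three concrete gaps. First, a degree problem: the diagram you must produce sends $K_*$ of the groupoid sequence to $K_{*+1}$ of the Roe sequence, so no $*$-homomorphism $\Phi\colon C^*_r(G_{ad}^{[0,1)})\to D^*(\widetilde{X})^\Gamma$ can induce those vertical arrows directly. The paper resolves this by first passing to the \emph{gauge} adiabatic groupoid $\mathcal{G}_{ga}^{[0,1)}=\mathcal{G}_{ad}^{[0,1)}\rtimes\RR^*_+$ via the Connes--Thom isomorphism \eqref{TC}, which accounts for the shift, and only then constructing an honest representation ${}^{S}\iota$ of $C^*_r(\mathcal{G}_{ga}^{[0,1)})$ inside $D^*({}^{\mathrm{S}}\widetilde{X})^\Gamma$. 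Second, the passage from the complete edge metric naturally carried by the groupoid to the incomplete iie-metric is not handled by decay estimates for Schwartz kernels near the strata: it is an exact conjugation by the multiplication operator $m(\rho)$, with $\rho$ the total boundary defining function and $g=\rho^{-2}g'$, which identifies $C^*_r(\widetilde{X}^{reg}\times_\Gamma\widetilde{X}^{reg}\times(0,1)\rtimes\RR_+)$ with $C^*({}^{\mathrm{S}}\widetilde{X})^\Gamma$ on the nose. Pseudolocality over the singular strata then follows not from vanishing of kernels at boundary hypersurfaces but from the observation that a function pulled back from ${}^{\mathrm{S}}\widetilde{X}$ is constant along the fibers of each $\varphi_i$, so that $r^*f=s^*f$ on the boundary part of the groupoid and the relevant commutators vanish there.

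Third, and most importantly, you do not say why the vertical arrow on the quotients is an isomorphism; ``checking that the adiabatic index construction reproduces the Higson--Roe realization of the symbol class'' is exactly the nontrivial step, not a remark. The paper composes the quotient map with Paschke duality and identifies the result, via the multiplication morphism ${}^{\mathrm{S}}m$ and the boundary map of the adiabatic sequence, with the Dirac element ${}^{\mathrm{S}}D_X$ implementing Poincar\'e duality for Thom--Mather stratified spaces (Theorem \ref{pdsing}, taken from \cite{DLduality,DLR}). This external input is the heart of the argument: only the ideal map is ``obviously'' an isomorphism (by Morita equivalence), so without the Poincar\'e duality identification the five lemma has nothing to feed on. Your proposal replaces this step with kernel estimates that are neither needed nor sufficient.
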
	
The definition of the groupoids in the first row will be recalled in Section \ref{stratified-spaces}.
But it is worth noticing that  in the first row we make use  of non-compact spaces equipped with complete metrics, whereas the second row is constructed from compact spaces equipped with non-complete metrics and the two rows are related by a conformal change of metrics. 

		\tableofcontents 	
\subsubsection*{Aknowledgements}
It is a pleasure to thank Moulay Benameur, Paolo Piazza, Thomas Schick and Georges Skandalis for many interesting discussions. 
\section{Roe's Algebras}

In this section we are going to recall the fundamental definitions and results about coarse geometry, coarse C*-algebras and coarse index theory. We will not enter into the details of the proofs, which one can easily find in the literature.
Let $X$ be any set, if $A \subset X \times X$ and $B \subset X \times X$, we will use the following notation:
\[
A^{-1} := \{(y, x)\, |\, (x, y)\in A\}\]
and
\[A\circ B := \{(x, z) \,|\, \exists\, y\in X\, : (x, y)\in A\, \mbox{and}\, (y, z)\in B\}.\]

\begin{definition} A coarse structure on $X$ is a collection of subsets of $X\times X$, called entourages,
	that have the following properties:
	\begin{itemize}
		\item For any entourages $A$ and $B$, the subsets $A^{-1}$, $A\circ B$, and $A\cup B$ are entourages;
		\item Every finite subset of $X \times X$ is an entourage;
		\item Any subset of an entourage is an entourage.
	\end{itemize}
	If $\{(x,x) | x \in X\}$ is an entourage, then the coarse structure is said to be unital.
\end{definition}

\begin{definition}
	Let $(X,d)$ be a metric space and let $S$ be any set. Two function $f_1,f_2\colon S\to X$ are said close if $\{d(f_1(s),f_2(s))\,:\,s\in S\}$ is a bounded set of $\RR$.
\end{definition}

\begin{definition}
	Let $(X,d)$ be a metric space. A subset $E\in X\times X$ is said to be controlled if the projection maps $\pi_1,\pi_2\colon X\times X\to X$ are close.	
\end{definition}
The controlled sets are the ones that are contained in a uniformly bounded neighbourhood of the diagonal.
The \emph{metric coarse structure} on $(X,d)$ is given by the collection of all controlled subset of $X\times X$.

%
%\begin{lemma}
%	Two maps $f_1,f_2\colon S\to X$ are close if and only if the image of $(f_1, f_2)\colon S\to X\times X$ is a controlled set.
%	
%\end{lemma}
%There is also a notion of morphism between coarse spaces.
%\begin{definition}
%	Let $X$ and $Y$ two metric spaces. A map $f\colon X\to Y$ is said to be coarse if
%	\begin{itemize}
%		\item whenever $g_1g,_2\colon S\to X$ are close, $f\circ g_1$ and $f\circ g_2$ are close;
%		\item whenever $B$ is a bounded set of $Y$,  $f^{-1}(B)$ is a bounded set in $X$.
%	\end{itemize}
%	
%\end{definition}

Let $\widetilde{X}$ be a proper metric space equipped with a free and proper action of a countable discrete group $\Gamma$ of isometries of $\widetilde{X}$.

\begin{definition}
	Let $H$ be a Hilbert space equipped with a representation
	\[\rho\colon C_0(\widetilde{X})\to \BB(H)\]
	and a unitary representation
	\[U\colon \Gamma\to \BB(H)\]
	such that $U(\gamma)\rho(f)=\rho(\gamma^{-1}f)U(\gamma)$ for every $\gamma\in\Gamma$ and $f\in C_0(\widetilde{X})$.
	We will call such a triple $(H,U,\rho)$ a $\Gamma$-equivariant $C_0(\widetilde{X})$-module.	
\end{definition}

\begin{example}
	Let us set $H=L^2(\widetilde{X},\mu)$, where $\mu$ is a $\Gamma$-invariant Borel measure on $\widetilde{X}$. 
	Put
	\begin{itemize}
		\item $\rho\colon C_0(\widetilde{X})\to \BB(H)$ the representation given by multipliation operators;
		\item $U\colon\Gamma\to\BB(H)$ the representation given by traslation $U_\gamma\varphi(x)=\varphi(\gamma^{-1}x)$ for every $x\in X$.
	\end{itemize}	
	Then $(H,U,\rho)$ is a $\Gamma$-equivariant $C_0(\widetilde{X})$-module.
\end{example}

\begin{definition}
	
	Let $A$ be a C*-algebra and let $H$ be a Hilbert space. A 
	resentation $\rho\colon A\to\BB(H)$ is said to be ample if it extends to a representation
	$\widetilde{\rho}\colon \widetilde{A}\to\BB(H)$ of the unitalization of A which has the following properties:
	\begin{itemize}
		\item $\widetilde{\rho}$ is non-degenerate, meaning $\widetilde{\rho}(\widetilde{A})H$ is dense in $H$;
		\item  $\widetilde{\rho}(a)$ is compact for $a\in\widetilde{A}$ if and only if $a = 0$.
	\end{itemize}
	
	Moreover we will say that a
	representation $\rho\colon A\to\BB(H)$ is very ample if it is the countable direct sum of a fixed ample representation.
\end{definition}
If $H$ is equipped with a unitary representation $U$ of $\Gamma$, then we say that an operator $T\in\BB(H)$ is $\Gamma$-equivariant if $U_\gamma T U_{\gamma^{-1}}=T$ for all $\gamma\in\Gamma$.

\subsection{Controlled operators}

\begin{definition}
	Let $X$ and $Y$ be two proper metric spaces; let
	$\rho_X\colon C_0(X)\to \BB(H_X)$ and $\rho_Y\colon C_0(Y)\to \BB(H_Y)$ be two representations on separable Hilbert spaces.
	\begin{itemize}
		\item The support of an element $\xi\in H_X$ is the set $\mathrm{supp}(\xi)$ of all $x\in X$ such that for every open neighbourhood $U$ of $x$ there is a function $f\in C_0(U)$ with $\rho_X(f)\xi\neq 0$.
		\item The support of an operator $T\in\BB(H_X,H_Y)$ is the set 
		$\mathrm{supp}(T)$ of all $(y,x)\in Y\times X$ such that for all
		open neighbourhoods $U\ni y$ and $V\ni x$ there exist $f\in C_0(U)$ and $g\in C_0(Y)$ such that  $\rho_Y(f)T\rho_X(g)\neq 0$.
		\item An operator $T\in\BB(H_X,H_Y)$ is properly supported if the slices
		$\{y\in Y\,:\, (y,x)\in \mathrm{supp}(T) \}$ and  $\{x\in X\,:\, (y,x)\in \mathrm{supp}(T) \}$ are closed sets.
	\end{itemize}
\end{definition}

\begin{definition}
	Let $X$ be as in the previous definition. An operator $T\in\BB(H_X)$ is said to be controlled if its support is a controlled subset of $X\times X$.
\end{definition}
This means that an operator is controlled if it is supported in a uniformly bounded neighbourhood of the diagonal of $X\times X$. These operators are also said to have finite propagation.
\begin{proposition}
	The set of all controlled operators for $\rho_X\colon C_0(X)\to H_X$ is a unital *-algebra of $\BB(H_X)$.
\end{proposition}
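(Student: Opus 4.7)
The strategy is to verify closure under the five operations (sum, scalar multiple, product, adjoint, identity) by first establishing, at the level of supports, the inclusions
\begin{align*}
\mathrm{supp}(T+S) &\subseteq \mathrm{supp}(T)\cup\mathrm{supp}(S),\\
\mathrm{supp}(\lambda T) &\subseteq \mathrm{supp}(T),\\
\mathrm{supp}(TS) &\subseteq \mathrm{supp}(T)\circ\mathrm{supp}(S),\\
\mathrm{supp}(T^*) &= \mathrm{supp}(T)^{-1},\\
\mathrm{supp}(\mathrm{Id}) &\subseteq \Delta_X,
\end{align*}
and then invoking the coarse-structure axioms to conclude that the right-hand sides are entourages whenever the constituents are. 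The last inclusion requires the metric coarse structure to be unital, which it is since the diagonal is a controlled set.

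The three formal inclusions (sum, scalar, adjoint) are straightforward from the definition of support via contrapositive: if the point $(y,x)$ is missing from the right-hand set, one can exhibit, using the defining property of supports, neighbourhoods $U\ni y$ and $V\ni x$ and functions $f\in C_0(U)$, $g\in C_0(V)$ making the product $\rho_X(f)(T+S)\rho_X(g)$ (resp.\ $\rho_X(f)T^*\rho_X(g)$) vanish; in the adjoint case one uses that $\rho_X(f)T^*\rho_X(g) = (\rho_X(\overline g)T\rho_X(\overline f))^*$.

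The only delicate step is the product inclusion. If $(z,x)\notin \mathrm{supp}(T)\circ\mathrm{supp}(S)$, then for every $y\in X$ either $(z,y)\notin\mathrm{supp}(T)$ or $(y,x)\notin\mathrm{supp}(S)$. I would argue by covering a suitable compact region containing a neighbourhood of the diagonal ``between'' $T$'s and $S$'s supports (which is available because $X$ is proper and both $T$ and $S$ are assumed controlled, so the composition of entourages stays bounded near the diagonal) by finitely many opens $W_i\ni y_i$ on each of which at least one of the vanishing conditions can be realized by a function. Then, using a partition of unity subordinate to this cover and the non-degeneracy of $\rho_X$, one inserts $1=\sum \rho_X(\chi_i)$ between $T$ and $S$ and verifies, after multiplying on the appropriate sides by cutoffs supported in small neighbourhoods of $z$ and $x$, that every term in the resulting sum vanishes.

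The main obstacle is precisely this approximate-unit insertion argument for the composition: one must make sure that the cover of a ``connecting'' region is finite, which is why the fact that $T$ and $S$ are controlled (hence $\mathrm{supp}(T)\circ\mathrm{supp}(S)$ sits in a bounded neighbourhood of $\Delta_X$) is used twice, both to bound the geometry of the product support and to guarantee the finiteness of the partition of unity. Once the five support inclusions are in place, the axioms ``$A\cup B$, $A\circ B$, $A^{-1}$ and the diagonal are entourages'' from the definition of coarse structure give the unital $*$-algebra property immediately.
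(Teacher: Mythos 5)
Your proposal is correct and is precisely the standard argument (the paper itself omits the proof and defers to the literature, e.g.\ Higson--Roe, where exactly these five support inclusions are established and combined with the coarse-structure axioms). The one point that deserves the care you give it is indeed the product: the finite partition-of-unity insertion works because properness of $X$ together with the finite propagation of $S$ confines $S\rho(g)\xi$ to a relatively compact set once $g$ is supported in a fixed precompact neighbourhood of $x$, which should be chosen \emph{before} the cover to avoid circularity.
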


\subsection{The C*-algebras  $C^*(\widetilde{X})^\Gamma$ and $D^*(\widetilde{X})^\Gamma$}

Let $(H_{\widetilde{X}},U,\rho)$ be an ample $\Gamma$-equivariant $C_0(\widetilde{X})$-module.
\begin{definition}
	We define the C*-algebra $C^*(\widetilde{X})^{\Gamma}$ as the closure in $\BB(H_{\widetilde{X}})$ of the *-algebra of all $\Gamma$-equivariant operators $T$ such tat
	\begin{itemize}
		\item $T$ has finite propagation, i.e. there is a $R>0$ such that $\rho(\varphi)T\rho(\psi)=0$ for all $\varphi,\psi\in C_0(\widetilde{X})$ with $d(\mathrm{supp}(\varphi),\mathrm{supp}(\psi))>R$;
		\item $T$ is locally compact, i.e. $T\rho(\varphi)$ and $\rho(\varphi)T$ are compact operators for all $\varphi\in C_0(\widetilde{X})$.
	\end{itemize}
\end{definition}

\begin{definition}
	The algebra  $D^*(\widetilde{X})^\Gamma$ is the  closure in $\BB(H_{\widetilde{X}})$ of the *-algebra of all $\Gamma$-equivariant operators $T$ such that
	\begin{itemize}
		\item $T$ has finite propagation;
		\item $T$ is pseudolocal, i.e. $[T,\rho(\varphi)]$ is a  compact operator for any $\varphi\in C_0(\widetilde{X})$.
	\end{itemize}
	Here $H_{\widetilde{X}}$ has the structure of a very ample $\Gamma$-equivariant $\widetilde{X}$-module.
\end{definition}

If $\Gamma$ is the trivial group, then we will suppress it from the notation and write $C^*(\widetilde{X})$ and $D^*(\widetilde{X})$.
\begin{remark} 
The C*-algebra $D^*(\widetilde{X})^\Gamma$ is a *-subalgebra of the multiplier algebra of $C^*(\widetilde{X})^\Gamma$.
\end{remark}
\begin{remark}
	
	The algebras $C^*(\widetilde{X})^{\Gamma}$ and $D^*(\widetilde{X})^\Gamma$  depend on the $C_0(\widetilde{X})$-module used to construct it, but one can prove  that their K-theory does not.
\end{remark}
%scrivi la parte sulla truncation

\subsection{The Paschke duality}

Since $C^*(\widetilde{X})^\Gamma$ is a bilateral *-ideal in $D^*(\widetilde{X})^\Gamma$,
we can consider the quotient C*-algebra $D^*(\widetilde{X})^\Gamma/C^*(\widetilde{X})^\Gamma$.
By a truncation argument one can prove the following isomorphism of C*-algebras
\begin{equation}\label{isoD/C}
D^*(\widetilde{X})^\Gamma/C^*(\widetilde{X})^\Gamma\cong D^*(X)/C^*(X)
\end{equation}
where $X$ is the quotient space $\widetilde{X}/\Gamma$.

The Paschke duality asserts the existence of the following isomorphism
\begin{equation}
\mathcal{P}\colon K_{0}(D^*(X)/C^*(X))\to KK_1(C(X),\CC),
\end{equation}
that sends  the projection $p\in D^*(X)/C^*(X)$ to the Kasparov bimodule
$(H,\rho, 2p-1)$, where $\rho\colon C(X)\to H$ is the $C(X)$-module used to define $D^*(X)/C^*(X)$.

We can see $\mathcal{P}$ as an asymptotic morphism in $E^1(D^*(X)/C^*(X)\otimes C(X),\CC)$, see \cite{QR}.
Indeed consider the generator $u$ of $E^1(\mathbb{Q}(H), \CC)$, where $\mathbb{Q}(H)$ is the Calkin algebra of $H$.  It is  the class associated the following extension of C*-algebra
\[
\xymatrix{0\ar[r] & \mathbb{K}(H)\ar[r] & \mathbb{B}(H)\ar[r]& \mathbb{Q}(H)\ar[r]& 0}
\]
and $u$ is given by the boundary map of the long exact sequence in E-theory  associated to the previous exact sequence.

Let $\mu\colon D^*(X)/C^*(X)\otimes C(X)\to \mathbb{Q}(H)$ be the *-homomorphism given by $$T\otimes f\mapsto M_fT,$$ where $M_f$ is the multiplication operator. It is a well-defined *-homomorphism because  $D^*(X)/C^*(X)$ and $C(X)$ commute in $\mathbb{Q}(H)$.
Then $\mathcal{P}$ is given by 
\begin{equation}\label{paschke}
\mu^*(u)\in E^1(D^*(X)/C^*(X)\otimes C(X),\CC).
\end{equation}
More precisely $\mu^*(u)$  arises from  the pull-back extension
\[
\xymatrix{
	0\ar[r] & \mathbb{K}(H)\ar[r]\ar[d]^= & E\ar[r]\ar[d]& D^*(X)/C^*(X)\otimes C(X)\ar[d]^\mu\ar[r]& 0\\
	0\ar[r] & \mathbb{K}(H)\ar[r] & \mathbb{B}(H)\ar[r]& \mathbb{Q}(H)\ar[r]& 0}
\]
 as the boundary morphism of the long exact sequence in E-theory associated to the top row.
 \subsection{The analytic surgery exact sequence}
 
 Let $\widetilde{X}$ be a proper metric space such that the countable discrete group $\Gamma$ acts properly, freely and isometrically on it.
 The algebras defined in the previous section fit in the following exact sequence

 \begin{equation}\label{ASES}
 \xymatrix{\cdots\ar[r] & K_*(C^*(\widetilde{X})^\Gamma)\ar[r]&  K_*(D^*(\widetilde{X})^\Gamma)\ar[r] &  K_*(D^*(\widetilde{X})^\Gamma /C^*(\widetilde{X})^\Gamma)\ar[r] & \cdots}
 \end{equation}
 the Higson-Roe analytic surgery exact sequence.
Notice that $ K_*(C^*(\widetilde{X})^\Gamma)$ is isomorphic to 
$ K_*(C^*_r(\Gamma))$ and recall that $K_*(D^*(\widetilde{X})^\Gamma /C^*(\widetilde{X})^\Gamma)$ is isomorphic to  $KK_{*-1}(C_0(X), \CC)$ by Paschke duality.
In \cite{Roeassembly}  Roe proves that the boundary morphism of \eqref{ASES} is equivalent to the assembly map. In other words the following diagram
\begin{equation}
\xymatrix{
	K_{*+1}(D^*(\widetilde{X})^\Gamma /C^*(\widetilde{X})^\Gamma)\ar[r]^(.6)\partial\ar[d]^{\mathcal{P}}& K_*(C^*(\widetilde{X})^\Gamma)\ar[d]^\cong\\
	K_*^\Gamma(\widetilde{X})\ar[r]^{\mu_X^\Gamma}& K_*(C^*_r(\Gamma))
	}
\end{equation}
is commutative. Here we used the fact that, because of the assumptions about the action of $\Gamma$ on $\widetilde{X}$, the equivariant K-homology group  $KK_*^\Gamma(C_0(\widetilde{X}),\CC)$ is isomorphic to
 $KK_*(C_0(X),\CC)$.
 
\section{The Adiabatic Groupoid and the Gauge Adiabatic Groupoid}

We refer the reader to \cite{DL} and bibliography inside it for the notations and a detailed overview about groupoids and index theory.

\subsection{Lie groupoids and Lie algebroids}
\begin{definition} Let $G$ and $G^{(0)}$ be two sets.  A groupoid structure on $G$ over $G^{(0)}$ is given by the following morphisms:
	\begin{itemize}
		\item An injective map $u:G^{(0)}\rightarrow G$ called the unit map. We can identify $G^{(0)}$ with its image
		in $G$. 
		\item Two surjective maps: $r,s: G\rightarrow G^{(0)}$,
		which are respectively the range and  source map.
		\item An involution: $ i: G\rightarrow G
		$, $  \gamma  \mapsto \gamma^{-1} $ called the inverse
		map. It satisfies: $s\circ i=r$.
		\item A map $ p: G^{(2)}  \rightarrow  G
		$, $ (\gamma_1,\gamma_2)  \mapsto  \gamma_1\cdot \gamma_2 $
		called the product, where the set 
		$$G^{(2)}:=\{(\gamma_1,\gamma_2)\in G\times G \ \vert \
		s(\gamma_1)=r(\gamma_2)\}$$ is the set of composable pair. Moreover for $(\gamma_1,\gamma_2)\in
		G^{(2)}$ we have $r(\gamma_1\cdot \gamma_2)=r(\gamma_1)$ and $s(\gamma_1\cdot \gamma_2)=s(\gamma_2)$.
	\end{itemize}
	
	The following properties must be fulfilled:
	\begin{itemize}
		\item The product is associative: for any $\gamma_1,\
		\gamma_2,\ \gamma_3$ in $G$ such that $s(\gamma_1)=r(\gamma_2)$ and
		$s(\gamma_2)=r(\gamma_3)$ the following equality
		holds $$(\gamma_1\cdot \gamma_2)\cdot \gamma_3= \gamma_1\cdot
		(\gamma_2\cdot \gamma_3)\ .$$
		\item For any $\gamma$ in $G$: $r(\gamma)\cdot
		\gamma=\gamma\cdot s(\gamma)=\gamma$ and $\gamma\cdot
		\gamma^{-1}=r(\gamma)$.
	\end{itemize}
	
	We denote a groupoid structure on $G$ over $G^{(0)}$ by
	$G\rightrightarrows G^{(0)}$,  where the arrows stand for the source
	and target maps. 
\end{definition}

We will adopt the following notations: $$G_A:=
s^{-1}(A)\ ,\ G^B=r^{-1}(B) \ \mbox{ and } G_A^B=G_A\cap G^B \,$$
in particular if $x\in G^{(0)}$, the  $s$-fiber (resp. 
$r$-fiber) of $G$ over $x$ is $G_x=s^{-1}(x)$ (resp. $G^x=r^{-1}(x)$).

\begin{definition}
	
	We call $G$ a Lie groupoid when $G$ and $G^{(0)}$ are second-countable smooth manifolds
	with $G^{(0)}$ Hausdorff, the structural homomorphisms are smooth, $u$ is an embedding,
	$r$ and $s$ are submersions, and $i$ is a diffeomorphism.
\end{definition}

%\begin{remark}
%	Let $A$ be a C*-algebra. If $A$ is a $C(X)$-algebra, then we will use the following notation:
%	\begin{itemize}
%		\item if $O$ is an open set in $X$, then $A_O:=C_0(O)\cdot A$;
%		\item if $F$ is a closed set in $X$, then $A_F:=A/\left(C_0(X\setminus F)\cdot A\right)$;
%		\item if $Z=O\cap F$ is a locally closed set in $X$ (with $O$ open and $F$ closed), then $A_Z:=\left(A_O\right)_F=\left(A_F\right)_O$.
%	\end{itemize}
%	If $Y\subset Z$ is a closed set, then denote with $e^Z_Y$ the restricion homomorphism from $A_Z$ to $A_Y$.
	
%\end{remark}

%Obviously the previously defined groupoid C*-algebras  are $C_0(G^{(0)})$-algebras. 
%From now on, if $X$ is a locally closed subset of $G^{(0)}$ we will call
%$e_X\colon C^*_r(G)\to C^*_r(G_{|X})$ the evaluation map at $X$.

\begin{definition} A  Lie
	algebroid $\mathfrak{A} =(p:\mathfrak{A}\rightarrow TM,[\ ,\ ]_{\mathfrak{A}})$ on a smooth
	manifold $M$ is a vector bundle $\mathfrak{A} \rightarrow M$
	equipped with a bracket $[\ ,\ ]_{\mathfrak{A}}:\Gamma(\mathfrak{A})\times \Gamma(\mathfrak{A})
	\rightarrow \Gamma(\mathfrak{A})$ on the module of sections of $\mathfrak{A}$, together
	with a homomorphism of fiber bundle $p:\mathfrak{A} \rightarrow TM$ from $\mathfrak{A}$ to the
	tangent bundle $TM$ of $M$, called the  anchor map, fulfilling the following conditions:
	\begin{itemize}
		\item the bracket $[\ ,\ ]_{\mathfrak{A}}$ is $\RR$-bilinear, antisymmetric
		and satisfies the Jacobi identity,
		\item $[X,fY]_{\mathfrak{A}}=f[X,Y]_{\mathfrak{A}}+p(X)(f)Y$ for all $X,\ Y \in
		\Gamma(\mathfrak{A})$ and $f$ a smooth function of $M$, 
		\item $p([X,Y]_{\mathfrak{A}})=[p(X),p(Y)]$ for all
		$X,\ Y \in \Gamma(\mathfrak{A})$.
	\end{itemize}
\end{definition}

The  tangent space to $s$-fibers, that is $T_sG := \ker ds =
\bigcup_{x\in G^{(0)}} TG_x$ has the structure of  a Lie algebroid
on $G^{(0)}$, with anchor map given by $dr$. 
It is denoted by 
$\mathfrak{A}G$ and we call it the Lie algebroid of $G$.
One can prove that it is isomorphic to the normal bundle of the inclusion $G^{(0)}\hookrightarrow G$. 
\subsection{The adiabatic groupoid and the gauge adiabatic groupoid}\label{DNC}
Let $M_0$ be a smooth compact submanifold of a smooth manifold $M$ with normal bundle $\mathcal{N}$.
As a set, the deformation to the normal cone is \begin{equation}
DNC(M_0,M):=\mathcal{N} \times\{0\}\sqcup M\times\RR .
\end{equation}
In order to recall its smooth structure, we fix an exponential map, which is a diffeomorphism $\theta$ from
a neighbourhood $V'$ of the zero section $M_0$ in $N$ to a neighbourhood $V$ of $M_0$ in $M$.
We may cover $DNC(M_0,M)$ with two open sets $M\times \RR^*$, with the product differentiable structure, and $W=\mathcal{N}\times {0}\sqcup V\times\RR^*$,
endowed with the differentiable structure for which the map 
\begin{equation}\label{adiabatictopology}
\Psi\{(m,\xi,t)\in \mathcal{N}\times\RR\,|\,(m,t\xi)\in V'\}\to W
\end{equation}
given by $(m,\xi,t)\mapsto(\theta(m, t\xi),t)$,
for $t\neq0$, and by $(m,\xi,0)\mapsto(m,\xi,0)$, for $t=0$, is a diffeomorphism. One can verify that the transition map on the overlap of thes two charts is smooth, see for instance \cite[Section 3.1]{HS}.

%\DA FARE inserisci hilsum skandalis 2 nella biblio
\begin{definition}
	
The adiabatic groupoid $G^{[0,1]}_{ad}$ is given by the groupoid 
\[
\mathfrak{A}G\times\{0\}\cup G\times(0,1]\rightrightarrows G^{(0)}\times [0,1],
\]
with the smooth structure given by the 
deformation to the normal cone associated to the embedding  $G^{(0)}\hookrightarrow G$.
	We will use the notation $G_{ad}^{[0,1)}$ for the restriction of the adiabatic groupoid to the interval open at $1$, given by
	\[
	\mathfrak{A}G\times\{0\}\cup G\times(0,1)\rightrightarrows G^{(0)}\times [0,1).
	\]
\end{definition}

\begin{remark}\label{KKeq}
Let $\mathrm{ev}_0\colon C_r^*(G^{[0,1]}_{ad})\to C^*_r(\mathfrak{A}G)$ be the evaluation at $t=0$, then the associated KK-element is a KK-equivalence.
Indeed notice that $C^*_r(\mathfrak{A}G)$ is nuclear and that the kernel of $\mathrm{ev}_0$ is isomorphic to the contractible C*-algebra $C^*_r(G)\otimes C_0(]0,1])$.
Then $[\mathrm{ev}_0]\colon KK(A, C_r^*(G^{[0,1]}_{ad}))\to KK\left(A, C^*_r(\mathfrak{A}G)\right)$, understood as the Kasparov product with $[\mathrm{ev}_0]$ on the right, is an isomorphism for all C*-algebras $A$. This implies that there exists an element $[\mathrm{ev}_0]^{-1}\in KK\left(C^*_r(\mathfrak{A}G),C_r^*(G^{[0,1]}_{ad})\right)$ such that $[\mathrm{ev}_0]^{-1}\otimes [\mathrm{ev}_0]=1_{C^*_r(\mathfrak{A}G)}$ and $[\mathrm{ev}_0]\otimes [\mathrm{ev}_0]^{-1}=1_{C_r^*(G^{[0,1]}_{ad})}$. 
\end{remark}

Now we recall a definition from \cite[section 2.3]{DS}.
We have a natural action of the group $R_+^*$ compatible with the groupoid structure on $G_{ad}^{[0,1)}$ defined as follows.
Let $\alpha\colon \RR_+^*\to (0,1)$ be defined as $\alpha(t,\lambda)=\frac{2}{\pi}\arctan\left(\lambda\tan\left(\frac{\pi}{2}t\right)\right)$,  then one can easily check that $\alpha(\alpha(t,\lambda),\lambda')=\alpha(t,\lambda\lambda')$.

Thus we have that the map defined by
$(\gamma,t ; \lambda)\mapsto (\gamma,\alpha(t,\lambda))$ for $t\neq0$ and $(x,V,0;\lambda)\mapsto(x,\frac{1}{\lambda}V,0)$ gives an action of $\RR_+^*$ on  $G_{ad}^{[0,1)}$.
Notice that this action is isomorphic to the action on $G_{ad}$ from \cite[section 2.3]{DS}.
\begin{definition}
The gauge adiabatic groupoid $G_{ga}^{[0,1)}$ is the Lie groupoid obtained as the crossed product of this action:
\[
G_{ga}^{[0,1)}:=G_{ad}^{[0,1)}\rtimes\RR^*_+\rightrightarrows G^{(0)}\times[0,1).
\]
\end{definition}

\subsection{Groupoid C*-algebras}
We can associate to a Lie groupoid $G$ the *-algebra $C^\infty_c(G,\Omega^{\frac{1}{2}}(\ker ds\oplus\ker dr))$ of the compactly supported sections of the half densities bundle associated to $\ker ds\oplus\ker dr$, with:

\begin{itemize}
	\item the involution given by $f^*(\gamma)=\overline{f(\gamma^{-1})}$;
	\item and the product defined as $f*g(\gamma)=\int_{G_{s(\gamma)}} f(\gamma\eta^{-1})g(\eta)d\eta$.
\end{itemize}  

For all $x\in G^{(0)}$ the algebra $C^\infty_c(G,\Omega^{\frac{1}{2}}(\ker ds\oplus\ker dr))$ can be represented on 
$L^2(G_x,\Omega^{\frac{1}{2}}(G_x))$ by 
\[\lambda_x(f)\xi(\gamma)=\int_{G_{x}} f(\gamma\eta^{-1})g(\eta)d\eta, \]
where $f\in C^\infty_c(G,\Omega^{\frac{1}{2}}(\ker ds\oplus\ker dr))$ and $\xi\in L^2(G_x,\Omega^{\frac{1}{2}}(G_x))$.

\begin{definition}
	The reduced C*-algebra of a Lie groupoid G, denoted by $C^*_r(G)$, is the completion of $C^\infty_c(G,\Omega^{\frac{1}{2}}(\ker ds\oplus\ker dr))$ with respect to the norm
	\[
	||f||_r=\sup_{x\in G^{(0)}}||\lambda_x(f)||.
	\]
	
	The full C*-algebra of $G$ is the completion of 
	$C^\infty_c(G,\Omega^{\frac{1}{2}}(\ker ds\oplus\ker dr))$ with respect to all continuous representations.
\end{definition}

\begin{definition}
	Let $G$ be a Lie groupoid, then we associate to it a short exact sequence of C*-algebras
	\begin{equation}\label{AES}
	\xymatrix{0\ar[r] & C^*_r(G\times(0,1))\ar[r] & C^*_r(G_{ad}^{[0,1)})\ar[r]^{\mathrm{ev}_0} & C^*_r(\mathfrak{A}G)\ar[r]& 0}
	\end{equation}
	and we are going to call the following long exact sequence in K-theory
		\begin{equation}\label{KAES}
		\xymatrix{\dots\ar[r] & K_*(C^*_r(G\times(0,1)))\ar[r] & K_*(C^*_r(G_{ad}^{[0,1)}))\ar[r]^{\mathrm{ev}_0} & K_*(C^*_r(\mathfrak{A}G))\ar[r]& \dots}
		\end{equation}
 the (reduced) adiabatic exact sequence of $G$.
\end{definition}

The boundary map of  \ref{KAES} is given by the composition of the KK-element 
\begin{equation}\label{indad}
[\mathrm{ev}_0]^{-1}\otimes[\mathrm{ev}_1]\in KK\left(C_r^*(\mathfrak{A}G),C_r^*(G)\right).
\end{equation}
and  the suspension isomorphism $S$. Finally, notice that there is an analogous extension given by the full groupoid C*-algebras.

It is worth to point out that the Thom-Connes isomorphism \cite{Connes-thom, FS} gives a natural isomorphism of long exact sequences of KK-groups
\begin{equation}\label{TC}
\xymatrix@=0.8em{\dots\ar[r]& KK^*(A,C^*_r(G)\otimes C_0(0,1))\ar[r]\ar[d]^{\mathrm{TC}}& KK^*(A,C^*_r(G_{ad}^{[0,1)}))\ar[r]\ar[d]^{\mathrm{TC}}&KK^*(A,C^*_r(\mathfrak{A}G))\ar[r]\ar[d]^{\mathrm{TC}}&\dots\\
\dots\ar[r]& KK^{*+1}(A,C^*_r(G)\otimes\KK)\ar[r]& KK^{*+1}(A,C^*_r(G_{ga}^{[0,1)}))\ar[r]&KK^{*+1}(A,C^*_r(\mathfrak{A}G)\rtimes\RR^*_+)\ar[r]&\dots\\
}
\end{equation}
for any separable C*-algebra $A$, where the vertical arrows are given by the Kasparov product by the element constructed in \cite[Proposition1(i)]{FS}.
Notice that $$C^*_r(G)\otimes C_0(0,1)\rtimes\RR_+^*\cong C^*_r(G)\otimes \KK.$$
\subsection{The  Lie groupoid $\widetilde{X}\times_\Gamma\widetilde{X}$}
 \label{fundamentalgroupoid} 
 
Let $\pi\colon \widetilde{X}\to X$ be a Galois $\Gamma$-covering. 
Then the diagonal action of $\Gamma$ on $\widetilde{X}\times\widetilde{X}$ is proper and free. Let $G=\widetilde{X}\times_\Gamma\widetilde{X}$ be the quotient of this action, it has a Lie groupoid structure over $X$ described as follows:
\begin{itemize}
	\item the source and the range are given by $s([\widetilde{x}_1,\widetilde{x}_2])=\pi(\widetilde{x}_2)$ and $r([\widetilde{x}_1,\widetilde{x}_2])=\pi(\widetilde{x}_1)$
	\item the product of $[\widetilde{x}_1,\widetilde{x}_2]$ and $[\widetilde{x}_3,\widetilde{x}_4]$ is given by $[\widetilde{x}_1,\gamma(\widetilde{x}_2,\widetilde{x}_3)\cdot\widetilde{x}_4]$, where $\gamma(\widetilde{x}_2,\widetilde{x}_3)$ is the element of $\Gamma$ that sends $\widetilde{x}_3$ to $\widetilde{x}_2$.
\end{itemize}

The reduced C*-algebra $C^*_r(\widetilde{X}\times\widetilde{X})$ is the  C*-closure of the $C_c^\infty(\widetilde{X}\times\widetilde{X})$. One can see that simply as $\Gamma$-equivariant smoothing kernels on the universal covering of $X$.
It is easy to prove that $C^*_r(\widetilde{X}\times\widetilde{X})$ is Morita equivalent to $C^*_r(\Gamma)$.

The Lie algebroid of $G$ is isomorphic to $TX$, the tangent bundle of $X$, and the anchor map is given by the identity.
The reduced C*-algebra of the tangent bundle $C^*_r(TX)$ is isomorphic to the C*-algebra $C_0(T^*X)$ of continuous function vanishing at infinity. This isomorphism is given by the fiber-wise Fourier transform. 

Let us denote by $\partial_X^\Gamma\in KK(C_0(T^*X), C^*_r(\Gamma))$ the element defined in \eqref{indad} (up to Fourier transform and Morita equivalences).
In \cite{MP} it is proved that the application induced by the Kasparov product with $\partial^\Gamma_X$ is the $\Gamma$-equivariant analytical index of Atiyah-Singer.

\subsection{Poincaré duality}\label{pncr}

Let us consider the Lie groupoid of the pairs $X\times X$ over $X$. The groupoid C*-algebra $C^*_r(X\times X)$ is isomorphic to $\KK(L^2(X))$, the algebra of compact operators on $L^2(X)$. Its Lie algebroid is still $TX$ and 
\begin{equation}\label{dirac}
\cdot\otimes\partial_X\colon KK^*(\CC,C_0(T^*X))\to KK^*(\CC,\CC)
\end{equation}
is equivalent to the analytic index of Atiyah-Singer. 

Now, let $m\colon C_0(T^*X)\otimes C(X)\to C_0(T^*X)$ be the morphism given by 
$$\sigma\otimes f\mapsto \sigma\cdot \pi^*f,$$
where $\pi$ is the bundle projection.
  Then the so-called \emph{Dirac element}
$$D_X:=[m]\otimes_{C_0(T^*X)} \partial_X\in KK(C_0(T^*X)\otimes C(X),\CC)$$ implements, by Kasparov product, the Poincaré duality 
\begin{equation}\label{pduality}
\cdot\otimes D_X\colon KK(\CC,C_0(T^*X))\to KK(C(X),\CC)
\end{equation}
whose inverse is given by the principal symbol map.
See \cite{Kasparov, CS, DLisolated} for a detailed proof.

\subsection{Pseudodifferential operators}\label{PDO}

Let us recall the definition of a pseudodifferential $G$-operator.
We refer the reader to \cite{NWX} and \cite{Vassout} for pseudodifferential calculus on Lie groupoids.

\begin{definition}
	A linear $G$-operator is a continuous linear map
	$$ P\colon C^\infty_c(G,\Omega^{\frac{1}{2}})\to C^\infty(G,\Omega^{\frac{1}{2}})$$
	such that:
	\begin{itemize}
		\item $P$ restricts to a continuous family $(P_x)_{x\in G^{(0)}}$ of linear operators $P_x\colon C^\infty_c(G_x,\Omega^{\frac{1}{2}})\to C^\infty(G_x,\Omega^{\frac{1}{2}})$
		such that
		$$Pf(\gamma)= P_{s(\gamma)}f_{s(\gamma)}(\gamma) \quad \forall f\in C^\infty_c(G,\Omega^{\frac{1}{2}})$$
		where $f_x$ denotes the restriction of $f$ to $G_x$.
		\item The following equivariance property holds:
		$$ U_{\gamma}P_{s(\gamma)} =P_{r(\gamma)}U_{\gamma},  $$
		where $U_\gamma$ is the map induced on functions by the right multiplication by $\gamma$.
	\end{itemize}
	
	A linear G-operator $P$ is pseudodifferential of order $m$ if 
	\begin{itemize}
		\item  its  Schwartz kernel $k_P$ is  smooth outside $G^{(0)}$;
		\item for every distinguished chart $\psi: U\subset G\to \Omega\times s(U)\subset\RR^{n-p}\times\RR^{p}$ of $G$:
		$$  \xymatrix{U\ar[rr]^{\psi}\ar[dr]_{s} & &\Omega\times s(U)\ar[dl]^{p_{2}} \\
			&s(U)&  }$$ 
		the operator $(\psi^{-1})^{*}P\psi^{*}:
		C^{\infty}_{c}(\Omega\times s(U))\to C^{\infty}_{c}(\Omega\times
		s(U))$ is a smooth family parametrized by $s(U)$ of  
		pseudodifferential operators of order $m$ on $\Omega$.     
	\end{itemize} 	
	
	We say that $P$ is smoothing if $k_P$ is smooth and that $P$ is compactly supported if $k_P$ is compactly supported on $G$.
\end{definition}

The space $\Psi^*_c(G)$ of the compactly supported pseudodifferential $G$-operators is an involutive algebra. Observe that a pseudodifferential $G$-operator induces a family of pseudodifferential operators on $s$-fibers.
So we can define the principal symbol of a pseudodifferential $G$-operator $P$ as a function $\sigma(P)$ on $\mathfrak{S}^*G$, the cosphere bundle associated to the Lie algebroid $\mathfrak{A}G$ by
$$\sigma(P)(x,\xi)=\sigma(P_x)(x,\xi),$$
where $\sigma_{pr}(P_{x})$ is the principal symbol of the pseudodifferential
operator $P_{x}$ on the manifold $G_{x}$.  
Conversely,  given a symbol $f$ of order $m$ on $\mathfrak{A}^{*}G$ together with the
following data:
\begin{enumerate}
	\item a smooth embedding  $\theta:  U\to \mathfrak{A}G$,  where $ U$ is an open
	set in $G$ containing $G^{(0)}$, such that
	$\theta(G^{(0)})=G^{(0)}$, $(d\theta)|_{G^{(0)}}=\hbox{Id}$ and $\theta(\gamma)\in\mathfrak{A}_{s(\gamma)}G$
	for all $\gamma\in U$;
	\item a smooth compactly supported map $\phi:G\to \RR_{+}$ such that
	$\phi^{-1}(1)=G^{(0)}$;
\end{enumerate}
then a $G$-pseudodifferential operator $P_{f, \theta, \phi}$ is obtained by the
formula:
$$ P_{f, \theta, \phi}u(\gamma)=
\int_{\gamma'\in G_{s(\gamma)}\,,\, \xi\in \mathfrak{A}^{*}_{r(\gamma)}G}
e^{-i\theta(\gamma'\gamma^{-1})\cdot \xi}f(r(\gamma),
\xi)\phi(\gamma'\gamma^{-1})u(\gamma'))$$
with $u\in C^{\infty}_{c}(G,\Omega^{\frac{1}{2}})$. 
The principal symbol of $P_{f, \theta, \phi}$ is just the leading
part of $f$.  

The principal symbol map respects pointwise product while the product
law for total symbols is much more involved. An operator is {\it
	elliptic} when its principal symbol never vanishes and in that
case,  as in the classical situation, 
it has a parametrix inverting it modulo
$\Psi^{-\infty}_{c}(G)=C^{\infty}_{c}(G)$. 
\begin{remark}
	All these definitions and properties immediately extend to the case of operators acting
	between sections of bundles on $G^{(0)}$ pulled back to $G$ with the
	range map $r$. The space of compactly supported pseudodifferential
	operators on  $G$ acting on sections of $r^*E$ and taking values in sections of $r^*F$
	will be noted $\Psi_c^*(G,E, F)$. If $F=E$ we get an algebra
	denoted by $\Psi_c^*(G,E)$. 
\end{remark}

The operators of zero order $\Psi_c^0(G)$ form a subalgebra of the multiplier algebra
$M(C^{*}_r(G))$ and we will denote by $\Psi^0(G)$ its closure in the C*-norm. Moreover the closure of the operators of negative order is
$C^{*}_r(G)$.

From now on $G$ will be the Lie groupoid $\widetilde{X}\times_\Gamma\widetilde{X}\rightrightarrows X$, where $\Gamma $ acts on $\widetilde{X}$ freely and properly with $X=\widetilde{X}/\Gamma$.

\begin{remark}
	In our particular case it turns out  that the algebra of 0-order pseudodifferential $G$-operators is nothing but the algebra $\Psi^0_{\Gamma,prop}(\widetilde{X})$ of properly supported $\Gamma$-invariant pseudodifferential operator on $\widetilde{X}$, see \cite[Example 4.4]{NWX}. We will denote by $\Psi^0_{\Gamma}(\widetilde{X})$ its C*-closure.
\end{remark}

As in the classical case, one has the following pseudodifferential extension
\[
\xymatrix{0\ar[r] & C^*_r(G)\ar[r] & \Psi^0(G)\ar[r]^(.45)\sigma & C(\mathfrak{S}^*G)\ar[r]& 0}
\]
where the role of compact operators is played by the groupoid C*-algebra and symbols are functions on the cosphere bundle of the Lie algebroid.

If we take the pseudodifferential extension of the adiabatic groupoid we have the following short exact sequence.
\begin{equation}\label{pdoad}
\xymatrix{0\ar[r] & C^*_r(G_{ad}^{[0,1)})\ar[r] & \Psi^0(G_{ad}^{[0,1)})\ar[r]^(.4)\sigma & C(\mathfrak{S}^*(G_{ad}^{[0,1)}))\ar[r]& 0}
\end{equation}
Since $\mathfrak{A}(G_{ad}^{[0,1)})$ is isomorphic to $\mathfrak{A}(G)\times [0,1)$, it follows that $K_*(C(\mathfrak{S}^*(G_{ad}^{[0,1)})))$ is trivial and then, by exactness, one has that the first arrows of \eqref{pdoad}  induces the isomorphism
\begin{equation}\label{isoad}
K_*(C^*_r(G_{ad}^{[0,1)}))\cong K_*(\Psi^0(G_{ad}^{[0,1)})).
\end{equation}

Let us investigate then more closely the algebra $\Psi^0(G_{ad}^{[0,1)})$. It is a $C_0([0,1))$-algebra such that
\begin{itemize}
	
	\item at $t\neq0$ we have the algebra $\Psi^0(G)$, that is isomorphic to $\Psi^0_{\Gamma}(\widetilde{X})$; so for $t\in (0,1)$ we have a path $P_t$ of $\Gamma$-equivariant operators on $\widetilde{X}$ such that $P_1=0$ and the propagation of $P_t$ goes to $0$ as $t$ goes to $0$ (recall the differential structure of the adiabatic deformation given by \eqref{adiabatictopology});
	
	\item at $t=0$ we have $\Psi^0(TX)$, where we are seeing $TX\rightrightarrows X$ as a Lie groupoid. Since the source and the target map are the same for $TX$, it turns out that an element in $\Psi^0(TX)$ is a family of $\RR^k$-invariant pseudodifferential operators on $\RR^{k}$, with $k=\dim X$.
	Since a pseudodifferential operator is uniquely determined by its total symbol 
	and since we are considering polyhomogeneous symbols, it is easy to check that $\Psi^0_{\RR^k}(\RR^k)$ is isomorphic to the closure of $S^0(\RR^k)^{\RR^k}$, the algebra of the $\RR^k$-equivariant symbols on $\RR^k$.
But this algebra is isomorphic to the algebra of continuous functions on the closed unit ball $B^k$.
	Hence at $t=0$ we have the algebra $C(\mathfrak{B}^*X)$ of the continuous functions on the co-disk bundle of $X$.	
\end{itemize}

	Consider the map $\mathfrak{m}\colon C(X)\to \Psi^0_{\Gamma}(\widetilde{X})$ which associates to a function $f$ the operator $\mathfrak{m}(f)$ of multiplication by $f$. 
	The mapping cone C*-algebra of $\mathfrak{m}$ is given by 
	\[
	\mathcal{C}_{\mathfrak{m}}:=\{(f,P_t)\in C(X)\oplus\Psi^0_{\Gamma}(\widetilde{X})[0,1)\,|\, P_0=\mathfrak{m}(f) \}.
	\]
    
   Observe that such a path $P_t$ defines an element in $ \Psi^0(G_{ad}^{[0,1)})$, then we have the following  *-homomorphism $\eta\colon\mathcal{C}_{\mathfrak{m}}\to\Psi^0(G_{ad}^{[0,1)})$.
   
\begin{lemma}\label{isopdo}
The *-homomorphism
	$\eta$ induces an isomorphism
	\[
	[\eta]\colon K_*(\mathcal{C}_{\mathfrak{m}}) \to K_*(\Psi^0(G_{ad}^{[0,1)})).
	\]
\end{lemma}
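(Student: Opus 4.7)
The plan is to apply the five lemma to a morphism of short exact sequences comparing the mapping cone extension of $\mathfrak{m}$ with the extension of the adiabatic pseudodifferential algebra obtained by evaluation at $t=0$.

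First I would write down the canonical short exact sequence coming from the mapping cone construction,
\[
0 \longrightarrow \Psi^0_{\Gamma}(\widetilde{X}) \otimes C_0(0,1) \longrightarrow \mathcal{C}_{\mathfrak{m}} \xrightarrow{\mathrm{pr}} C(X) \longrightarrow 0,
\]
whose ideal consists of pairs $(0,P_t)$ with $P_0=0$ (equivalently, paths vanishing at $t=0$ in $\Psi^0_{\Gamma}(\widetilde{X})$ and, in the $C_0$ sense, at $t=1$), and whose quotient map is $(f,P_t)\mapsto f$. In parallel I would consider the short exact sequence coming from the $C_0[0,1)$-algebra structure of $\Psi^0(G_{ad}^{[0,1)})$,
\[
0 \longrightarrow \Psi^0_{\Gamma}(\widetilde{X}) \otimes C_0(0,1) \longrightarrow \Psi^0(G_{ad}^{[0,1)}) \xrightarrow{\mathrm{ev}_0} C(\mathfrak{B}^*X) \longrightarrow 0,
\]
using that $G_{ad}^{[0,1)}$ restricts to $G \times (0,1)$ over the open interval and that $\Psi^0(TX) \cong C(\mathfrak{B}^*X)$ at $t=0$, as recalled above.

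Next I would verify that $\eta$ fits into a commutative diagram of extensions
\[
\xymatrix{
0 \ar[r] & \Psi^0_{\Gamma}(\widetilde{X}) \otimes C_0(0,1) \ar[r]\ar[d]_{\mathrm{id}} & \mathcal{C}_{\mathfrak{m}} \ar[r]^{\mathrm{pr}}\ar[d]_{\eta} & C(X) \ar[r]\ar[d]_{p^*} & 0 \\
0 \ar[r] & \Psi^0_{\Gamma}(\widetilde{X}) \otimes C_0(0,1) \ar[r] & \Psi^0(G_{ad}^{[0,1)}) \ar[r]^-{\mathrm{ev}_0} & C(\mathfrak{B}^*X) \ar[r] & 0
}
\]
where $p\colon \mathfrak{B}^*X \to X$ is the bundle projection and $p^*$ denotes pullback of functions. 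Commutativity of the left square is immediate from the definition of $\eta$ restricted to pairs with $f=0$. Commutativity of the right square is the technical crux: it amounts to the identity $\mathrm{ev}_0\circ\eta(f,P_t)=p^*f$ in $C(\mathfrak{B}^*X)$, that is, the value at $t=0$ of the adiabatic extension of a path $P_t$ with $P_0=\mathfrak{m}(f)$ must be precisely $p^*f$. This follows from the fact that multiplication operators have zero propagation and principal symbol constant along cotangent fibers, so their adiabatic rescaling in the normal cone of $G^{(0)}\hookrightarrow G$ trivially converges to the pullback of $f$ to the co-disk bundle.

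Finally I would invoke the five lemma on the induced long exact sequences in K-theory. The left vertical map is the identity, hence an isomorphism. The right vertical map $p^*$ is a K-theory isomorphism because $\mathfrak{B}^*X$ deformation retracts onto the zero section $X$ along the radial direction, making $p^*$ a homotopy equivalence of commutative C*-algebras. The five lemma then yields that $[\eta]$ is an isomorphism. I expect the main obstacle to lie in the verification of the right square, since this is where one must exploit the adiabatic pseudodifferential calculus together with the specific constraint $P_0=\mathfrak{m}(f)$; the remaining steps are standard homological algebra combined with the homotopy invariance of K-theory.
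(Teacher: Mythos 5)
Your proposal is correct and follows essentially the same route as the paper: the same morphism of extensions with ideal $\Psi^0_{\Gamma}(\widetilde{X})\otimes C_0(0,1)$, identification of the right vertical arrow with the pull-back along the co-disk bundle projection (a homotopy equivalence, hence a K-theory isomorphism), and the Five Lemma. The extra care you take in justifying commutativity of the right square via the zero propagation of multiplication operators is a welcome elaboration of a point the paper leaves implicit, but it is not a different argument.
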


\begin{proof}
	The following commutative diagram
	\[\xymatrix{0\ar[r]& \Psi^0_{\Gamma}(\widetilde{X})\otimes C_0(0,1)\ar[r]\ar[d]&\mathcal{C}_{\mathfrak{m}}\ar[r]^{\mathrm{ev}_0}\ar[d]^{\eta}& C(X)\ar[r]\ar[d]&0\\
		0\ar[r] &\Psi^0_\Gamma(\widetilde{X})\otimes C_0(0,1)\ar[r]& \Psi^0(G_{ad}^{[0,1)})\ar[r]^{\mathrm{ev}_0}&\Psi^0(TX)\ar[r]&0}\]
has exact rows. Moreover,  up to the isomorphism between $\Psi^0(TX)$ and $C(\mathcal{B}^*X)$,  the right vertical arrow is exactly given by the pull-back of functions induced by $\pi\colon \mathcal{B}^*X\to X$.
Since $\pi$ is a homotopy equivalence, $\pi^*$ induces an isomorphism in K-theory. By the Five Lemma, it follows that $\eta$ induces an isomorphism of $K$-groups.
\end{proof}
  
\section{The main theorem}

From now on let $G\rightrightarrows X$ be the Lie groupoid $\widetilde{X}\times_\Gamma\widetilde{X}$ of subsection \ref{fundamentalgroupoid}. 
In this section we are going to compare the 	
adiabatic exact sequence \eqref{KAES} associated to $G$ and the the analytic surgery exact sequence \eqref{ASES} for $\widetilde{X}$ and we establish an explicit isomorphism between them.

\subsection{First approach}\label{first}
First consider the Hilbert space $\mathcal{H}:=L^2(\widetilde{X}\times(0,1))$. It is an ample $\Gamma$-equivariant $C_0(\widetilde{X})$-module.
Now observe that the essential *-ideal $C^*_r(G)\otimes C_0(0,1)\rtimes \RR_+^*$ of $C^*_r(G_{ga}^{[0,1)})$ is isomorphic to the subalgebra  $C^*(\widetilde{X})^\Gamma$ of $\mathbb{B}(\mathcal{H})$. This implies that $C^*_r(G_{ga}^{[0,1)})$ is faithfully represented on $\mathcal{H}$ through a *-homomorphism
\[
\iota\colon C^*_r(G_{ga}^{[0,1)})\to \BB(\mathcal{H}).
\]	

\begin{remark}\label{rmk-iota}
One can see the C*-algebra $C^*_r(G)\otimes C_0(0,1)\rtimes \RR_+^*\cong C^*_r(G)\otimes \KK(L^2(0,1))$  as the $\Gamma$-equivariant elements of a subalgebra sitting inside the multipliers of the groupoid C*-algebra of $\widetilde{G}:=\widetilde{X}\times\widetilde{X}\times (0,1)\times (0,1)\rightrightarrows\widetilde{X}$.
Notice that, although one is tempted to say that $C^*_r(G)\otimes \KK(L^2(0,1))$ is the $\Gamma$-equivariant part of $C^*_r(\widetilde{G})$ it-self, this is not true: indeed $C^*_r(\widetilde{G})$ is defined as the closure of compactly supported elements and it is isomorphic to $\KK(L^2(\mathcal{H}))$, whereas the equivariant lifts of elements in   $C^*_r(G)\otimes \KK(L^2(0,1))$  are supported near the diagonal: in other words they are properly supported, but not compactly supported.
The same reasoning holds for $C^*_r(G_{ga}^{[0,1)})$, which is the $\Gamma$-equivariant part of a subalgebra in the multipliers of $C^*_r(\widetilde{G}_{ga}^{[0,1)})$.

Finally, observe that
if $\widetilde{\xi}\in C^\infty(\widetilde{G}_{ga}^{[0,1)})$ is the $\Gamma$-equivariant lift of an element $\xi\in C^\infty_c(G_{ga}^{[0,1)})$, then $\iota(\xi)$ is the image of the lift $\widetilde{\xi}$ through the extension to multiplier algebras of the morphism $\widetilde{\iota}\colon C^*_r(\widetilde{G})\otimes C_0(0,1)\rtimes \RR_+^*\to \KK(L^2(\mathcal{H}))$.
\end{remark}

\begin{lemma}\label{mappa}
	The image of $\iota$ is contained in  $D^*(\widetilde{X})^\Gamma$.
\end{lemma}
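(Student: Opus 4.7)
The strategy is to verify, on the norm-dense $*$-subalgebra $C_c^\infty(G_{ga}^{[0,1)})\subset C^*_r(G_{ga}^{[0,1)})$, the three defining conditions of $D^*(\widetilde{X})^\Gamma$: $\Gamma$-equivariance, finite propagation in the $\widetilde{X}$-direction, and pseudolocality. Since $D^*(\widetilde{X})^\Gamma$ is norm-closed and $\iota$ is a $*$-homomorphism, hence norm-continuous, this will suffice.

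The $\Gamma$-equivariance of $\iota(\xi)$ is automatic from the definition of $\iota$ recalled in Remark \ref{rmk-iota}: the operator is built from the $\Gamma$-equivariant lift $\widetilde{\xi}$, and equivariance of the lift translates directly into $U_\gamma\,\iota(\xi)\,U_\gamma^{-1}=\iota(\xi)$ for every $\gamma\in\Gamma$.

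For finite propagation, I would analyze the Schwartz kernel of $\iota(\xi)$ on $\widetilde{X}\times(0,1)\times\widetilde{X}\times(0,1)$ using the adiabatic smooth structure \eqref{adiabatictopology} together with the semidirect product structure giving the gauge groupoid. In these coordinates the $\widetilde{X}$-propagation of a smooth compactly supported kernel at parameter $t\in(0,1)$ is of order $t$, while the $\RR_+^*$-action $\alpha$ only rescales the range parameter $s=\alpha(t,\lambda)$ and does not alter the spatial propagation at a fixed $t$. Compactness of the support of $\xi$ in $G_{ga}^{[0,1)}$ then bounds $(t,\lambda)$ in a compact set and yields a uniform bound on the $\widetilde{X}$-propagation of $\iota(\xi)$.

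The crucial and most delicate step is pseudolocality: one must show that $[\iota(\xi),\rho(\varphi)]$ is compact for every $\varphi\in C_0(\widetilde{X})$. By density of $C_c^\infty(\widetilde{X})$ in $C_0(\widetilde{X})$ I would reduce to $\varphi\in C_c^\infty(\widetilde{X})$. The Schwartz kernel of $[\iota(\xi),\rho(\varphi)]$ is
\[
k_{\iota(\xi)}(x,t,y,s)\bigl(\varphi(y)-\varphi(x)\bigr),
\]
and for smooth $\varphi$ we have $|\varphi(y)-\varphi(x)|\leq \|d\varphi\|_\infty\, d(x,y)$. Because the kernel of $\iota(\xi)$ is supported in the adiabatic zoom neighborhood $d(x,y)\lesssim t$, this Lipschitz factor provides an extra power of $t$ that tames the $t^{-\dim X}$ growth of the kernel coming from the adiabatic rescaling. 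The resulting commutator kernel is smooth, uniformly bounded, properly supported, and has compactly supported $(t,s)$-slice, so it defines a Hilbert--Schmidt and a fortiori compact operator on $\mathcal{H}$.

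The main obstacle I expect is precisely the $t\to 0$ bookkeeping: the kernel of $\iota(\xi)$ itself is typically not Hilbert--Schmidt near $t=0$, so $\iota(\xi)$ is not locally compact in general---this is exactly why the target is $D^*(\widetilde{X})^\Gamma$ rather than $C^*(\widetilde{X})^\Gamma$. Making the cancellation $|\varphi(y)-\varphi(x)|\cdot|k_{\iota(\xi)}|$ rigorous in the adiabatic coordinates, uniformly down to $t=0$, is the technical heart of the proof.
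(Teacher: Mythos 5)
Your reduction to the dense subalgebra, the equivariance and finite-propagation steps, and the identification of the key cancellation mechanism (the factor $\varphi(y)-\varphi(x)$ vanishing on the diagonal, which is exactly where the adiabatic kernel concentrates as $t\to 0$) all match the structure of the paper's argument. But the concluding step of your pseudolocality argument has a genuine quantitative gap: one power of $t$ from the Lipschitz estimate is not enough to make the commutator kernel square-integrable. In adiabatic coordinates the kernel at parameter $t$ has the form $t^{-n}g(x,(x-y)/t,t)$ with $n=\dim X$, so its Hilbert--Schmidt norm squared over $\widetilde{X}\times\widetilde{X}$ is of order $t^{-n}$; after multiplying by $|\varphi(y)-\varphi(x)|^2\lesssim t^2$ you get $t^{2-n}$, and $\int_0^1 t^{2-n}\,dt$ diverges for $n\geq 3$. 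So ``Hilbert--Schmidt and a fortiori compact'' fails in the generic case, even though the commutator \emph{is} compact. A minimal repair along your lines: the same Lipschitz bound controls the \emph{operator} norm of the commutator restricted to $\{t<\epsilon\}$ by $O(\epsilon)$, while the restriction to $\{t\geq\epsilon\}$ is a smooth compactly supported kernel, hence compact; so the commutator is a norm limit of compact operators.

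The paper sidesteps all estimates with a purely algebraic version of the same cancellation: by Remark \ref{rmk-iota}, $f\cdot\iota(\xi)-\iota(\xi)\cdot f=\widetilde{\iota}\bigl((r^*f-s^*f)\widetilde{\xi}\bigr)$, and since $r=s$ on the fibre at $t=0$ the element $(r^*f-s^*f)\widetilde{\xi}$ vanishes at $t=0$, i.e.\ lies in the kernel of $\mathrm{ev}_0$, which $\widetilde{\iota}$ carries into $\mathbb{K}(\mathcal{H})$. This replaces your $t\to 0$ bookkeeping entirely and is the cleaner route; I would recommend adopting it, or at least replacing the Hilbert--Schmidt claim by the operator-norm cutoff argument above.
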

\begin{proof}
By Remark \ref{rmk-iota} we deduce that
$f\cdot\iota(\xi)=\widetilde{\iota}(r^*f\cdot \widetilde{\xi})$ and $\iota(\xi)\cdot f=\widetilde{\iota}( s^*f\cdot \widetilde{\xi})$ for all $f\in C_0(\widetilde{X})$ and for all $\xi\in C^*_r(G_{ga}^{[0,1)})$.
Since at the parameter $0$ the range and the source map coincide, we have that  $r^*f=s^*f$ and then
 that $[\iota(\xi),f]$ is compact for all $\xi\in C^*_r(G_{ga}^{[0,1)})$ and $f\in C_0(\widetilde{X})$.

Finally, observe that image of 
$C^*_r(G_{ga}^{[0,1)})$ into $\mathbb{B}(\mathcal{H})$ is the closure of a *-algebra of  $\Gamma$-equivariant operators  of finite propagation. It follows that  $\iota(C^*_r(G_{ga}^{[0,1)}))$ is contained in $D^*(\widetilde{X})^\Gamma$.
\end{proof}

As a consequence of the previous lemma we have the following  commutative diagram of C*-algebras.
\begin{equation}\label{iota}
\xymatrix{0\ar[r]&C_r^*(G)\otimes\KK\ar[r]\ar[d]^\iota& C^*_r(G_{ga}^{[0,1)})\ar[r]\ar[d] ^\iota& C^*_r(TX\rtimes\RR^*_+)\ar[r]\ar[d]^\iota&0\\
	0\ar[r]&C^*(\widetilde{X})^\Gamma\ar[r] & D^*(\widetilde{X})^\Gamma\ar[r]& D^*(\widetilde{X})^\Gamma/C^*(\widetilde{X})^\Gamma\ar[r]&0}
\end{equation}

\begin{theorem}\label{th1}
	The vertical arrows of diagram \eqref{iota} induce isomorphisms in K-theory.
\end{theorem}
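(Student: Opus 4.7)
The plan is to invoke the Five Lemma applied to the six-term K-theory long exact sequences associated to the two rows of \eqref{iota}. The diagram commutes at the level of C*-algebras, so it induces a morphism of these long exact sequences. It then suffices to show that the leftmost and rightmost vertical arrows induce isomorphisms on K-theory; the middle arrow follows automatically.

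For the left arrow $\iota \colon C^*_r(G)\otimes\KK \to C^*(\widetilde{X})^\Gamma$, both algebras have K-theory canonically isomorphic to $K_*(C^*_r(\Gamma))$: the source because $C^*_r(G)$ is Morita equivalent to $C^*_r(\Gamma)$ by the discussion of Section \ref{fundamentalgroupoid} and tensoring with $\KK$ preserves K-theory; the target by the classical theorem of Roe recalled after \eqref{ASES}. To verify that $\iota_*$ realizes this identification, I would exhibit a projection $q \in C^*_r(G)\otimes\KK$ built from a cutoff function $\chi$ on a fundamental domain of the $\Gamma$-action together with a rank-one projection on $L^2(0,1)$, and then check that $\iota(q)$ is precisely the fundamental-domain compression implementing the Morita equivalence $C^*(\widetilde{X})^\Gamma\sim C^*_r(\Gamma)$. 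Remark \ref{rmk-iota}, which describes $\iota$ as the $\Gamma$-invariant part of a concrete representation of a subalgebra of multipliers of $C^*_r(\widetilde{G}_{ga}^{[0,1)})$, already sets up this comparison transparently.

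For the right arrow $\iota \colon C^*_r(TX\rtimes\RR^*_+) \to D^*(\widetilde{X})^\Gamma/C^*(\widetilde{X})^\Gamma$, I would compute both K-groups independently and match them on a generating set. On the source, the Thom--Connes isomorphism of \eqref{TC} gives $K_*(C^*_r(TX\rtimes\RR^*_+))\cong K_{*-1}(C_0(T^*X))$, which by the Poincaré duality of Section \ref{pncr} is $KK_{*-1}(C(X),\CC)$. On the target, the isomorphism \eqref{isoD/C} combined with the Paschke duality \eqref{paschke} gives the same group $KK_{*-1}(C(X),\CC)$. One must then check that $\iota_*$ agrees with the composition of these identifications; the natural test is to evaluate on the symbol class of a (graded) elliptic operator on $X$, whose image along the top path is the Dirac-type $KK$-class of its principal symbol, while its image along the bottom path is, by the definition of $\mathcal{P}$ as $\mu^*(u)$ in \eqref{paschke}, the Kasparov bimodule obtained by realizing that same symbol as a zero-order pseudodifferential operator modulo compacts.

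I expect the main obstacle to be precisely this last compatibility, since the two descriptions use quite different formalisms (asymptotic morphisms and $KK$-theoretic Dirac elements on one side, explicit ample $C(X)$-modules and $\mathbb{Q}(H)$-valued multipliers on the other). My strategy would be to realize both descriptions as the boundary map of one common pseudodifferential extension. Specifically, using the isomorphism \eqref{isoad} together with Lemma \ref{isopdo}, the K-theory of $C^*_r(G_{ad}^{[0,1)})$ is computed by the mapping cone $\mathcal{C}_{\mathfrak{m}}$, which already interpolates between the symbol algebra $C(\mathfrak{B}^*X)$ and the representation of $C(X)$ by multiplication operators on an ample module, i.e.\ exactly the data entering Paschke duality. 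Factoring the right vertical arrow through this intermediate extension, and then applying the crossed product by $\RR_+^*$ to pass back to the gauge-adiabatic picture, should reduce the verification to a direct symbol-calculus computation at the level of principal symbols.
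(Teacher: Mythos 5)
Your overall skeleton --- apply the Five Lemma to the induced map of six-term exact sequences and reduce everything to the outer vertical arrows --- is exactly the paper's. Two remarks. For the left arrow you work harder than necessary: by the construction in Section \ref{first}, $\iota$ restricted to the ideal is already an isomorphism of C*-algebras $C^*_r(G)\otimes\KK\cong C^*(\widetilde{X})^\Gamma$, so no Morita-equivalence bookkeeping or choice of cutoff projection is needed. The real issue is the right arrow, and there your proposal stops exactly where the proof has to start. You correctly reduce (via \eqref{isoD/C} and Paschke duality) to showing that $\mathcal{P}\circ[\iota]$ coincides with the Poincar\'e duality isomorphism, and you correctly flag this compatibility as the main obstacle --- but you do not prove it. ``Evaluate on the symbol class of an elliptic operator'' and ``should reduce to a direct symbol-calculus computation'' are statements of intent, not arguments; moreover, agreeing on elliptic symbol classes only settles the matter if you also argue that these generate $K_*(C_0(T^*X))$ and set up the comparison as one of group homomorphisms after all the intermediate identifications, none of which you do.

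The paper closes this step without any symbol calculus, by a purely E/KK-theoretic identity. Writing Paschke duality as $\mu^*(u)$ with $u$ the class of the Calkin extension (as in \eqref{paschke}), one has $\mathcal{P}\circ[\iota]=(\iota\otimes \mathrm{id}_{C(X)})^*\mu^*(u)=\bar{\mu}^*\iota^*(u)$, where $\bar{\mu}(\xi\otimes f)=\xi\cdot r^*f$. The key observation --- the idea missing from your proposal --- is that $\iota^*(u)$, the pullback of the Calkin extension along $\iota$, is precisely the boundary class of the gauge-adiabatic extension of the pair groupoid $X\times X$, i.e.\ the Atiyah--Singer index element $\partial_X$ of \eqref{dirac} up to Thom--Connes and suspension; since $\bar{\mu}$ corresponds under Thom--Connes to the morphism $m$ of Section \ref{pncr}, the composite $\mathcal{P}\circ[\iota]$ is literally the Dirac element $D_X$ implementing \eqref{pduality}, hence an isomorphism. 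Your fallback route through $\mathcal{C}_{\mathfrak{m}}$, \eqref{isoad} and Lemma \ref{isopdo} is viable, but it is a different proof --- essentially the paper's second approach in Section \ref{second} --- and even there the comparison of boundary maps still has to be carried out (the paper does it via the naturality triangle \eqref{triangle}). As it stands, the decisive identification is asserted, not established.
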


\begin{proof}
Obviously the *-homomorphism $\iota \colon C_r^*(G)\otimes\KK\to C^*(\widetilde{X})^\Gamma$ induces an isomorphism. So if we prove that $[\iota]\colon K_*(C^*_r(TX\rtimes\RR^*_+))\to K_*(D^*(\widetilde{X})^\Gamma/C^*(\widetilde{X})^\Gamma)$ is an isomorphism, thanks to the Five Lemma, we get the wished result.

First of all recall that, by using the isomorphism \eqref{isoD/C}, we can replace $D^*(\widetilde{X})^\Gamma/C^*(\widetilde{X})^\Gamma$ with $D^*(X)/C^*(X)$.
Since Paschke duality is an isomorphism, it follows that
proving that $\mathcal{P}\circ [\iota]\colon K_*(C^*_r(TX\rtimes\RR^*_+))\to KK^{*+1}(C(X),\CC)$ is an isomorphism is equivalent to prove that $[\iota]$ is so.

Recall that Paschke duality is given by the asymptotic morphism 
$\mu^*(u)$ in \eqref{paschke}, hence $\mathcal{P}\circ [\iota]$ is given by the asymptotic morphism
\begin{equation}\label{eqazione1}
(\iota\otimes \mathrm{id}_{C(X)})^*\mu^*(u)\in E^1(C^*_r(TX\rtimes\RR^*_+)\otimes C(X),\CC).
\end{equation}
Observe that, since $C^*_r(TX\rtimes\RR^*_+)\otimes C(X)$ is nuclear, $(\iota\otimes \mathrm{id}_{C(X)})^*\mu^*(u)$ is  an element of $KK^1(C^*_r(TX\rtimes\RR^*_+)\otimes C(X),\CC)$.
Moreover $(\iota\otimes \mathrm{id}_{C(X)})^*\mu^*(u)=(\mu\circ(\iota\otimes \mathrm{id}_{C(X)}))^*(u)$ and $\mu\circ(\iota\otimes \mathrm{id}_{C(X)})=\iota\circ\bar{\mu}$, where 
\[
\bar{\mu}\colon C^*_r(TX\rtimes\RR^*_+)\otimes C(X)\to C^*_r(TX\rtimes\RR^*_+)
\]
is the  *-homomorphism of C*-algebras given by $\xi\otimes f\mapsto \xi\cdot  r^*f$ (notice that $\xi\cdot r^*f=s^*f\cdot\xi$ so that $\bar{\mu}$ is well defined).

Hence $\mathcal{P}\circ [\iota]$ is given by the KK-element
$\bar{\mu}^*\iota^*(u)$. But $\iota^*(u)$ is exactly the boundary map of the second row of \eqref{TC} for $G= X\times X$ and then for $C^*_r(G)\cong  \KK(L^2(X))$.
So $TC\circ\iota^*(u)\circ TC^{-1}$ is equal to the composition of the KK-element $\partial_X$ in \eqref{dirac} and the suspension isomorphism $S$. Moreover, since $C_0(0,1)\rtimes\RR_+\cong\KK$, the composition of the suspension isomorphism and $TC$ corresponds to the Morita equivalence $KK(\CC, A)\cong KK(\CC, A\otimes \KK)$.

Finally, observe that $TC\circ[\bar{\mu}]\circ TC^{-1}$ is equal to $[m]$, the morphism used in Section \ref{pncr} to define the KK-element $D_X$.
It follows that $TC\circ \mathcal{P}\circ [\iota]\circ TC^{-1}\circ S^{-1}$ is equal to $D_X$, which defines the Poincaré duality of \eqref{pduality}.
In conclusion we have that
\begin{equation}
[\iota]= \mathcal{P}^{-1}\circ TC^{-1}\circ D_X
\end{equation}
and consequently that $[\iota]$ is an isomorphism.

\end{proof}

\subsection{Second approach}\label{second}

Let $D^*(\widetilde{X})^\Gamma$ be the structure Roe algebra associated to the very ample $\Gamma$-equivariant $C_0(\widetilde{X})$-module $L^2(\widetilde{X})\otimes H$, with $H=l^2(\NN)$.

Consider the subalgebra  $L^\infty(X)\otimes\BB(H)\cong L^\infty(\widetilde{X})^\Gamma\otimes\BB(H)\subset \BB(L^2(\widetilde{X})\otimes H)$: 
it is immediate to see that $L^\infty(X)\otimes\BB(H)$ is contained in $D^*(\widetilde{X})^\Gamma$. 

\begin{lemma}\label{isomc}
Let $j$ be the inclusion  $L^\infty(X)\otimes\BB(H)\to D^*(\widetilde{X})^\Gamma$ and let $SD^*(\widetilde{X})$ denote the suspension of $D^*(\widetilde{X})^\Gamma$, then there is an isomorphism
	\[
	K_*(SD^*(\widetilde{X})^\Gamma)\to K_*(\mathcal{C}_j)
	\]
	where $\mathcal{C}_j$ is the mapping cone C*-algebra of $j$.
\end{lemma}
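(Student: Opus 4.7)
The plan is to read off the isomorphism from the six-term exact sequence attached to the mapping cone of $j$. By construction, $\mathcal{C}_j$ fits into the canonical short exact sequence
\begin{equation*}
0 \longrightarrow SD^*(\widetilde{X})^\Gamma \longrightarrow \mathcal{C}_j \longrightarrow L^\infty(X)\otimes \BB(H) \longrightarrow 0,
\end{equation*}
in which the first arrow is precisely the map the lemma is about. So it suffices to prove that $K_i\bigl(L^\infty(X)\otimes \BB(H)\bigr)=0$ for $i=0,1$: then the six-term exact sequence immediately collapses to the desired isomorphism $K_*(SD^*(\widetilde{X})^\Gamma)\xrightarrow{\cong}K_*(\mathcal{C}_j)$.

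For the vanishing I would run an Eilenberg swindle on the $\BB(H)$ factor. Since $H=\ell^2(\NN)$, there is a countable family of isometries $s_1,s_2,\dots\in\BB(H)$ with mutually orthogonal ranges summing to $1$ in the strong operator topology. Setting $\phi(T):=\sum_{n\geq 1} s_n T s_n^*$ defines a $*$-endomorphism of $\BB(H)$ satisfying the absorption identity $\phi\oplus\mathrm{id}_{\BB(H)}\cong\phi$, realised unitarily via the shift on $\bigoplus_n H$. Tensoring with $\mathrm{id}_{L^\infty(X)}$ produces an endomorphism of $L^\infty(X)\otimes\BB(H)$ enjoying the same property, so on K-theory one obtains $(\mathrm{id}\otimes\phi)_*+\mathrm{id}_*=(\mathrm{id}\otimes\phi)_*$, whence $\mathrm{id}_*=0$ and both K-groups vanish.

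The only delicate point to pin down is the precise meaning of $L^\infty(X)\otimes\BB(H)$ as a subalgebra of $\BB(L^2(\widetilde{X})\otimes H)$ and, correspondingly, the topology in which the series defining $\phi$ converges. In the $W^*$-realisation used in Subsection \ref{second}, where $L^\infty(X)\otimes\BB(H)$ sits inside $\BB(L^2(\widetilde{X})\otimes H)$ via multiplication operators tensored with $\BB(H)$, the operators $1\otimes s_n$ act naturally and $\sum_n(1\otimes s_n)\,T\,(1\otimes s_n^*)$ converges in the strong operator topology; hence the swindle is legitimate and the lemma follows.
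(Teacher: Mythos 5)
Your proof is correct and follows the same skeleton as the paper's: both arguments start from the mapping-cone extension $0\to SD^*(\widetilde{X})^\Gamma\to\mathcal{C}_j\to L^\infty(X)\otimes\BB(H)\to 0$ and reduce the lemma to the vanishing of $K_*\bigl(L^\infty(X)\otimes\BB(H)\bigr)$. Where you diverge is in how that vanishing is established. The paper invokes the K\"unneth theorem together with $K_*(\BB(H))=0$; you instead run an Eilenberg swindle directly on the algebra, using a sequence of isometries $1\otimes s_n$ with orthogonal ranges to build an endomorphism $\mathrm{id}\otimes\phi$ absorbing the identity, whence $\mathrm{id}_*=0$ on K-theory. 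Your route is arguably the more careful one: the K\"unneth formula for $K_*(A\otimes B)$ requires one factor to lie in the bootstrap class (in particular to be separable and nuclear), and neither $L^\infty(X)$ nor $\BB(H)$ satisfies these hypotheses literally, so the paper's one-line justification is really shorthand for an argument like yours. The swindle works at the level of the algebra itself, needs no separability or nuclearity, and only uses the standard facts that orthogonal $*$-homomorphisms add on K-theory and that conjugation by an isometry induces the identity. Your closing remark about where the series $\sum_n(1\otimes s_n)T(1\otimes s_n^*)$ converges is the right point to flag: on elementary tensors $f\otimes b$ the map is just $f\otimes\phi(b)$, so $\mathrm{id}\otimes\phi$ is a well-defined $*$-endomorphism of whichever completion (spatial C*- or von Neumann tensor product) is meant, and the argument goes through in either reading.
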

\begin{proof}
	Consider the following exact sequence
	\[
	\xymatrix{0\ar[r]&SD^*(\widetilde{X})^\Gamma\ar[r]&\mathcal{C}_j\ar[r]&L^{\infty}(X)\otimes \BB(H)\ar[r]&0}.
	\]
	By the K\"{u}nneth Theorem, since $K_*(\BB(H))$ is trivial, the K-theory of $L^\infty(X)\otimes \BB(H)$ is trivial too.
	Then the desired isomorphism follows.
\end{proof}

\begin{theorem}
	The following zig-zag induces an isomorphism in K-theory
	\begin{equation}\label{algebras}
	\xymatrix{SD^*(\widetilde{X})^\Gamma\ar[r]&\mathcal{C}_j& \mathcal{C}_{\mathfrak{m}}\ar[l]\ar[r]&\Psi^0(G_{ad}^{[0,1)})& C^*_r(G_{ad}^{[0,1)})\ar[l]}
	\end{equation}
\end{theorem}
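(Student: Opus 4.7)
My plan is to verify that each of the four arrows of the zig-zag induces an isomorphism in K-theory; the composition (inverting the backward arrows) then yields the desired isomorphism $K_*(C^*_r(G_{ad}^{[0,1)}))\cong K_*(SD^*(\widetilde X)^\Gamma)$. Three of the four arrows are already handled by the preceding material: $SD^*(\widetilde X)^\Gamma\hookrightarrow\mathcal{C}_j$ is a K-isomorphism by Lemma \ref{isomc}; $\eta\colon\mathcal{C}_\mathfrak{m}\to\Psi^0(G_{ad}^{[0,1)})$ is a K-isomorphism by Lemma \ref{isopdo}; and $C^*_r(G_{ad}^{[0,1)})\hookrightarrow\Psi^0(G_{ad}^{[0,1)})$ is a K-isomorphism by \eqref{isoad}, via the contractibility of $\mathfrak{A}G\times[0,1)$.

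The main work is therefore to describe and analyze the middle arrow $\psi\colon\mathcal{C}_\mathfrak{m}\to\mathcal{C}_j$. I would define it by the natural inclusions $(f,P_t)\mapsto(f\otimes 1,\,P_t\otimes 1)$, where $f\otimes 1\in L^\infty(X)\otimes\BB(H)$ acts by multiplication and $P_t\otimes 1$ is the amplification of $P_t$ from $L^2(\widetilde X)$ to $L^2(\widetilde X)\otimes H$. Since $\Psi^0_\Gamma(\widetilde X)$ consists of properly supported $\Gamma$-invariant pseudodifferential operators, $P_t\otimes 1$ has finite propagation and is pseudolocal, hence lies in $D^*(\widetilde X)^\Gamma$; and the matching condition $P_0\otimes 1=j(f\otimes 1)$ is immediate from $\mathfrak{m}(f)=M_f$.

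To prove $\psi_*$ is an isomorphism, my strategy is to identify the zig-zag composite $K_*(C^*_r(G_{ad}^{[0,1)}))\to K_{*+1}(D^*(\widetilde X)^\Gamma)$ with the isomorphism already established in Theorem \ref{th1} (composed with the Connes-Thom isomorphism \eqref{TC}, which transports $C^*_r(G_{ad}^{[0,1)})$-K-theory into $C^*_r(G_{ga}^{[0,1)})$-K-theory). Since three of the four zig-zag arrows are already K-isomorphisms, once the composite is shown to agree with this known isomorphism, the remaining arrow $\psi_*$ is forced to be an isomorphism as well.

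The hard part will be constructing the explicit commutative diagram that compares the two routes. One must track simultaneously the Connes-Thom element on the algebroid side, the adiabatic degeneration of $\Psi^0(G_{ad}^{[0,1)})$ to $C(\mathfrak{B}^*X)$ at $t=0$, the Paschke-duality-based description of Theorem \ref{th1}'s isomorphism (via the element $\mu^*(u)$ in \eqref{paschke}), and the mapping cone structures of $\mathcal{C}_\mathfrak{m}$ and $\mathcal{C}_j$, verifying that they all fit together compatibly. A naive five-lemma applied directly to the mapping cone six-term sequences for $\psi$ does not suffice: indeed the induced map $K_*(\Psi^0_\Gamma(\widetilde X))\to K_*(D^*(\widetilde X)^\Gamma)$ is not an isomorphism in general, since although the ideals $C^*_r(G)$ and $C^*(\widetilde X)^\Gamma$ are Morita equivalent, the quotients $C(\mathfrak{S}^*X)$ and $D^*(\widetilde X)^\Gamma/C^*(\widetilde X)^\Gamma\cong KK_{*-1}(C(X),\CC)$ are very different K-theoretically. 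This genuine obstruction is exactly why the detour through Theorem \ref{th1} is necessary.
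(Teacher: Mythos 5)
Your treatment of the three outer arrows matches the paper exactly (Lemma \ref{isomc}, Lemma \ref{isopdo}, and \eqref{isoad}), and your description of the middle arrow $\mathcal{C}_{\mathfrak{m}}\to\mathcal{C}_j$ as the natural inclusion is the intended one. But your argument for the middle arrow has a genuine gap: the entire proof is deferred to the claim that the zig-zag composite agrees with the isomorphism of Theorem \ref{th1} (transported by the Connes--Thom isomorphism \eqref{TC}), and you explicitly leave the construction of the comparison diagram as ``the hard part'' without carrying it out. That identification is not a routine verification --- the two constructions live in different models (a faithful representation of the gauge adiabatic algebra inside $\mathbb{B}(\mathcal{H})$ on one side, pseudodifferential mapping cones on the other), and the paper nowhere establishes that the two isomorphisms coincide; it presents them as two independent proofs. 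As written, your proposal reduces the theorem to an unproved compatibility statement that is plausibly as hard as the theorem itself.

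You also dismiss the five-lemma strategy too quickly. Your observation that the five lemma fails for the mapping-cone sequences $0\to S\Psi^0_{\Gamma}(\widetilde{X})\to\mathcal{C}_{\mathfrak{m}}\to C(X)\to 0$ and $0\to SD^*(\widetilde{X})^\Gamma\to\mathcal{C}_j\to L^\infty(X)\otimes\mathbb{B}(H)\to 0$ is correct, but the paper applies the five lemma to a different pair of extensions: the columns of diagram \eqref{algebras}, namely $0\to SC^*_r(G)\to\mathcal{C}_{\mathfrak{m}}\to\mathcal{C}_{\pi^*}\to 0$ and $0\to SC^*(\widetilde{X})^\Gamma\to\mathcal{C}_j\to\mathcal{C}_{j'}\to 0$. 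Here the ideals are Morita equivalent, and the remaining task --- showing $\mathcal{C}_{\pi^*}\to\mathcal{C}_{j'}$ is a K-isomorphism --- is handled by a naturality argument: every algebra $A$ in the bottom row carries a map $m_A^*(\partial_A)\colon K_*(A)\to KK^*(C(X),\CC)$ compatible with the horizontal arrows, and this map is the Paschke duality isomorphism at one end and the Poincar\'e duality isomorphism at the other, forcing all the bottom horizontal arrows to be K-isomorphisms. This is the localized input your sketch is missing; if you want to salvage your bootstrap route instead, you must actually produce the commutative diagram identifying the composite with Theorem \ref{th1}'s map, which you have not done.
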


\begin{proof}
	The first arrow induces the isomorphism stated in Lemma \ref{isomc},
	the third arrow induces the isomorphism of Lemma \ref{isopdo} and the last arrow gives the isomorphism in \eqref{isoad}.
	The only isomorphism to check is the one induced by the second arrow. The following diagram
	
		\begin{equation}\label{algebras}
		\xymatrix{0\ar[d]&0\ar[d]& 0\ar[d]&0\ar[d]&0\ar[d]\\
			SC^*(\widetilde{X})^\Gamma\ar[r]\ar[d]&SC^*(\widetilde{X})^\Gamma\ar[d]& SC^*_r(G)\ar[l]\ar[r]\ar[d]&SC^*_r(G\ar[d])& SC^*_r(G)\ar[l]\ar[d]\\
			SD^*(\widetilde{X})^\Gamma\ar[r]\ar[d]&\mathcal{C}_j\ar[d]& \mathcal{C}_{\mathfrak{m}}\ar[l]\ar[r]\ar[d]&\Psi^0(G_{ad}^{[0,1)})\ar[d]& C^*_r(G_{ad}^{[0,1)})\ar[l]\ar[d]\\
			S(D^*(X)/C^*(X))\ar[r]\ar[d]&\mathcal{C}_{j'}\ar[d]& \mathcal{C}_{\pi^*}\ar[l]\ar[r]\ar[d]&\Psi^0(G_{ad}^{[0,1)})/SC^*_r(G)\ar[d]& C^*_r(TX)\ar[l]\ar[d]\\
			0&0&0&0&0
			}
		\end{equation}
	is commutative with exact columns and, using the Five lemma, one can prove that all the horizontal arrows but $\mathcal{C}_{\mathfrak{m}}\to \mathcal{C}_{j}$ and $\mathcal{C}_{\pi^*}\to \mathcal{C}_{j'}$ induce isomorphisms in K-theory.
	
	Here $\mathcal{C}_{\pi^*}$ is the mapping cone of $\pi^*\colon C(X)\to C(S^*X)$ and $ \mathcal{C}_{j'}$ is the mapping cone of $j'\colon L^\infty(X)\otimes B(H)\to D^*(X)/C^*(X) $. Notice that here we freely identify $D^*(\widetilde{X})^\Gamma/C^*(\widetilde{X})^\Gamma$ with $D^*(X)/C^*(X) $.
	
In order to apply the Five lemma for the second and the third columns, we are proving that $\mathcal{C}_{\pi^*}\to \mathcal{C}_{j'}$ induces an isomorphism.  To that aim we are going to use, as in the proof of Theorem \ref{th1}, the naturality of Paschke and Poincar\'{e} duality.
 
 Continuous functions on $X$ are multipliers of any algebra among those ones in the third row of diagram \eqref{algebras}. Let $A$ denote any of them, then 
 $C(X)$ commutes with $A$ inside the multipliers and we have a well defined *-homomorphism
 $m_A\colon C(X)\otimes A\to A$.
 
 Moreover observe that $A$ is also the third member in the non-equivariant exact sequences analogous to the ones we are considering, that is the ones  with the ideal equal to the suspension of compact operators $\KK$.
 Let $\partial_A\colon E(A,\CC)$ be the associated boundary map in $E$-theory.  In this case we have no shift because the ideal is the suspension of $\KK$.
 
 So $m_A^*(\partial_A)$ gives a group morphism $K_*(A)\to KK^{*}(C(X), \CC)$ such that
 \begin{equation}\label{triangle}
 \xymatrix{K_*(A)\ar[rr]\ar[rd]_{m_A^*(\partial_A)}& &K_*(A')\ar[ld]^{m_{A'}^*(\partial_{A'})}\\  &KK^{*}(C(X), \CC)&}
 \end{equation}
 is commutative, here the horizontal arrow is induced by the corresponding one among the arrows in the lower row of diagram \eqref{algebras}.
Since for $A$ equal to $	S(D^*(X)/C^*(X))$ and $C^*_r(TX)$, $m_A^*(\partial_A)$ is the isomorphism given by  the Paschke duality and the Poincar\'{e} duality respectively, one proves that $m_A^*(\partial_A)$ is an isomorphism for any $A$, using the commutativity \eqref{triangle} repeatedly.
Now using this fact and the commutativity of the triangle
\eqref{triangle} for $A=\mathcal{C}_{\pi^*}$ and $A'=\mathcal{C}_{j'}$, we deduce that the horizontal arrow 
$\mathcal{C}_{\pi^*}\to \mathcal{C}_{j'}$ induces an isomorphism in K-theory. By the Five Lemma also $\mathcal{C}_{\mathfrak{m}}\to \mathcal{C}_{j}$ induces an isomorphism.
\end{proof}
\section{Stratified spaces}\label{stratified-spaces}
In this section we are going to see that the previous results applies without much more effort to the context of smoothly stratified spaces. 
%Because of expository reasons, we decided not to give a proof directly in this more general setting and deduce it from the proof in the smooth case. 
For the comfort of the reader, it seems to us suitable to treat first the non-singular case and then to explain why it works in the same way for the singular context. This allow to separate the difficulties of the proof (which is the same in both the settings) from the issues arising when one treats stratified spaces.

\subsection{Blow-up groupoid}
We quickly recall
 the blow-up construction in the
groupoid context from \cite{DSblup}. Let $Y$ be a smooth compact manifold and let $X$ be a submanifold of $Y$ and let $DNC(Y,X)$ be the associated deformation to the normal cone, see Section \ref{DNC}.
The group $\RR^*$ acts on $DNC(Y,X)$ by 
$$\lambda\cdot ((x,\xi),0)=((x,\lambda^{-1}\xi),0) \,,\quad 
\lambda\cdot (y,t)=(y,\lambda t) \;\;\text{with}\;\;(x,\xi)\in N_X^Y\,,(y,t)\in Y\times \RR^*\,.
$$
Given a commutative diagram of smooth maps 
\[
\xymatrix{X\ar@{^{(}->}[r]\ar[d]^f& Y\ar[d]^f\\
	X^\prime\ar@{^{(}->}[r]& Y^\prime}
\]
where the horizontal arrows are inclusions of submanifolds, we naturally obtain a smooth map
$DNC(f)\colon DNC(Y,X) \to DNC(Y',X')$. 

This map is defined by $DNC(f)(y,\lambda) = (f (y),\lambda)$
for $y\in Y$ and $\lambda\in\RR$ and $DNC(f)(x,\xi,0) = (f(x),f_N(\xi),0)$ for $(x,\xi) \in N_X^Y$, where $f_N \colon N_X^Y \to N_{X'}^{Y'}$ is the linear map induced by the differential $df$. Moreover it is  equivariant with respect to the action of $\RR^*$.

The action of $\RR^*$ is free and locally proper on $DNC(Y,X)\setminus X\times\RR$ and we define $Blup(Y,X)$ as the quotient space of this action.

If $H\rightrightarrows H^{(0)}$ is a closed subgroupoid of a Lie groupoid $G\rightrightarrows G^{(0)}$, then 
$DNC(G,H)$ is a Lie groupoid over $DNC(G^{(0)},H^{(0)})$ where the source and target maps are simply given by
$DNC (s)$ and $DNC(r)$ as defined above. On the other hand,  $Blup(G,H)$ is not a Lie groupoid over $Blup(G^{(0)},H^{(0)})$, since the $Blup$ construction is not functorial.

\begin{definition}
	The blow-up groupoid of $H$ in $G$ is defined as  the dense open subset of
	$Blup(G,H)$ given by $$Blup_{r,s}(G,H):=\left(DNC(G,H)\setminus (H\times\RR\cup DNC(s)^{-1}(H^{(0)}\times\RR)\cup DNC(r)^{-1}(H^{(0)}\times\RR))\right)/ \RR^*$$
	it is a Lie groupoid over $Blup(G^{(0)},H^{(0)})$.
\end{definition}
We shall be also interested in a variant of this construction: we consider $DNC(G,H)\rightrightarrows  DNC(G^{(0)},H^{(0)})$ 
and define $DNC^+ (G,H)$ as its restriction to $(N^{G^{(0)}}_{H^{(0)}})^+ \times \{0\} \cup G^{(0)}\times \RR^*_+$
with $(N^{G^{(0)}}_{H^{(0)}})^+$ denoting the positive normal bundle, where, for $h\in H^{(0)}$,
	$\left(N^{G^{(0)}}_{H^{(0)}}\right)^+_h$ is defined by $(\RR^n)_+:= \RR^n_+$ once we fix a linear isomorphism 
	$\left(N^{G^{(0)}}_{H^{(0)}}\right)_h$ with  $\RR^n$.
We also define $Blup^+(G,H)$ as the quotient of $DNC^+ (G,H)\setminus H\times \RR_+$ by the action 
of $\RR^*_+$. We obtain in this way the groupoid $$Blup^+_{r,s}(G,H) \rightrightarrows 
Blup^+ (G^{(0)},H^{(0)}).$$

\subsection{Manifolds with fibered corners and iterated edge metrics}
Let us recall the notion of a manifold with fibered corners, due to Melrose.
\begin{definition} Let $X$ be a compact manifold with corners and
	$H_1, . . . ,H_k$ an exhaustive list of its set of boundary hypersurfaces $M_1X$.
	Suppose that each boundary hypersurface $H_i$ is the total space of a smooth
	fibration $\phi_i \colon H_i\to S_i$ where the base $S_i$ is also a compact manifold with corners. The collection of fibrations
	$\phi=(\phi_1,\dots,\phi_k)$ 
	is said to be an iterated
	fibration structure if there is a partial order on the set of hypersurfaces such
	that
	\begin{enumerate}
		\item for any subset $I\subset \{1, \dots, k\}$ with 
		$\bigcap_{i\in I}
		H_i \neq\emptyset$, the set $\{H_i | i \in I\}$
		is totally ordered.
		\item If $H_i < H_j$ , then $H_i \cap H_j  \neq\emptyset$, $\phi_i \colon H_i \cap H_j \to S_i$ is a surjective
		submersion and $S_{ji}:=\phi_j (H_i\cap H_j) \subset S_j$ is a boundary hypersurface
		of the manifold with corners $S_j$ . Moreover, there is a surjective
		submersion $\phi_{ji}\colon S_{ji} \to S_i$ such that on $H_i \cap H_j$ we have
		$\phi_{ji}\circ \phi_j=\phi_i$.
		\item The boundary hypersurfaces of $S_j$ are exactly the $S_ji$ with $H_i < H_j$ .
		In particular if $H_i$ is minimal, then $S_i$ is a closed manifold.
	\end{enumerate}
	Denote by $Z_j$ the fiber of the fibration $\phi_j\colon H_j\to S_j$.
 \end{definition}

The quotient space ${}^{\mathrm{S}}X=X/\sim$, where
\[
x\sim y\Longleftrightarrow\, x=y \quad \mbox{or} \quad \,\exists i\quad \mbox{s.t.}\quad x,y\in H_i \quad\mbox{with}\quad \varphi_i(x)= \varphi_i(y),
\]
is a so-called Thom-Mather stratified space with strata $\{S_1,\dots, S_k\}$, see \cite{Mather}. In turn $X$ is called a resolution of ${}^{\mathrm{S}}X$.

Recall from \cite{ALMP1,ALMP2, ALMP3} that an iterated incomplete iterated edge metric $g$ (shortly an iie-metric) is a metric on $\mathring{X}$ such that in a collar neighbourhood of an hypersurface $H_i$ it takes the form 
\[
dx_i^2+x_i^2g_{Z_i}+ \varphi^*g_{S_i}
\]
where $x_i$ is a boundary defining function of $H_i$ and $g_{Z_j}$ and $g_{S_j}$ are metric with the same structure on $Z_j$ and $S_j$.
In particular  an  iie-metric on $\mathring{X}$
includes a Riemannian metric on each
stratum of ${}^{\mathrm{S}}X$
and that these metrics fit together continuously.

In particular, by \cite[Theorem 2.4.7]{Pflaum} the topology on ${}^{\mathrm{S}}X$
is
that of the metric space with distance between two points given by taking the minimum over
rectificable curves joining them. As a metric space, ${}^{\mathrm{S}}X$
is complete and locally compact \cite[Theorem 2.4.17]{Pflaum} and hence a length space.
\begin{remark}\label{coverings}
Consider a Galois $\Gamma$-covering ${}^{\mathrm{S}}\widetilde{X}$ of ${}^{\mathrm{S}}X$ and its resolution $\widetilde{X}$.
They come with Galois $\Gamma$-coverings $\widetilde{H}_i$ and $\widetilde{S}_i$ over $H_i$ and $S_i$ respectively for all $i$. Moreover we have a $\Gamma$-equivariant lift $\widetilde{\varphi}_i\colon \widetilde{H}_i\to\widetilde{S}_i$ of $\varphi_i$ such that the links are still $Z_i$, see for instance \cite[Section 2.4]{PZ}. Finally from a iie-metric $g$ on ${}^{\mathrm{S}}X$ we obtain a $\Gamma$-equivariant iie-metric $\widetilde{g}$ on ${}^{\mathrm{S}}\widetilde{X}$.
\end{remark}
As in \cite[Section 3.5]{AP}, we can consider the analytic surgery sequence of Higson and Roe for Thom-Mather spaces, 
induced by the following exact sequence of C*-algebras\begin{equation}\label{ASESSing}
\xymatrix{\cdots\ar[r] & K_*(C^*({}^{\mathrm{S}}\widetilde{X})^\Gamma)\ar[r]&  K_*(D^*({}^{\mathrm{S}}\widetilde{X})^\Gamma)\ar[r] &  K_*(D^*({}^{\mathrm{S}}\widetilde{X})^\Gamma /C^*({}^{\mathrm{S}}\widetilde{X})^\Gamma)\ar[r] & \cdots},
\end{equation}
and, as before, we have that  $K_*(D^*({}^{\mathrm{S}}\widetilde{X})^\Gamma /C^*({}^{\mathrm{S}}\widetilde{X})^\Gamma)\simeq KK_{*+1}(C({}^{\mathrm{S}}X), \CC)$, by Paschke duality.
\subsection{Poincar\'{e} duality for stratified spaces}

We can associate a Lie groupoid to a manifold with fibered corners in the following way.
Let $\{H_1,\dots ,H_k\}$ be a list such that if $H_i<H_j\Longrightarrow i<j$ and
observe that if $H_i<H_j$, then $H_i\times_{S_i}H_i\rightrightarrows H_i$ is a closed Lie subgroupoid of 
         $H_j\times_{S_j} H_j$.
\begin{definition}         
Let $G(X,\varphi)\rightrightarrows X$ be the Lie groupoid
\[
Blup^+_{r,s}(\dots(Blup^+_{r,s}(Blup^+_{r,s}(X\times X,H_1\times_{S_1}H_1), H_2\times_{S_2}H_2)\dots, H_k\times_{S_k}H_k)
\]
obtained by a sequence of blowing-up procedures.
\end{definition}
Notice that in this definition the order of the blow-ups is not secondary: if $H_i<H_j$, then there is no immersion of $H_i\times_{S_i} H_i$ into the blow-up of $H_j\times_{S_j} H_j$ into $X\times X$.
As a set $G(X,\varphi)$ is given by 
$$
\mathring{X}\times\mathring{X}\cup \bigsqcup_{j=0}^k (H_j\times_{S_j}TS_j\times_{S_j}H_j)_{|X_j}
$$
where $X_j= H_j\setminus \left(H_j\cap\,\bigcup_{i>j}H_i\right)$.

\begin{definition}
Consider the adiabatic deformation groupoid $G(X,\varphi)_{ad}^{[0,1)}\rightrightarrows X\times [0,1)$. 
Set $X_\partial:= \mathring{X}\cup \left(\partial X\times[0,1) \right)$   and 
define the non-commutative tangent bundle of $X$ as the Lie groupoid
\[
T^{NC}_\varphi X:= \left(G(X,\varphi)_{ad}^{[0,1)}\right)_{|X_\partial}\rightrightarrows X_\partial.
\]
As a set $T^{NC}_\varphi X$ is equal to $T\mathring{X}\cup \bigsqcup_{j=0}^k \left((H_j\times_{S_j}TS_j\times_{S_j}H_j)_{|X_j}\right)_{ad}^{[0,1)}$.

\end{definition}

We thus obtain an exact sequence of C*-algebras analogous to \eqref{AES}
\begin{equation}\label{AESsing}
\xymatrix{0\ar[r]& C^*_r(\mathring{X}\times\mathring{X}\times(0,1))\ar[r]& C^*_r(G(X,\varphi)_{ad}^{[0,1)})\ar[r] &C^*_r(T^{NC}_\varphi X)\ar[r]& 0}.
\end{equation}
Denote by $\partial_{(X,\varphi)}\colon KK(\CC,C^*_r(T^{NC}_\varphi X))\to KK(\CC,C^*_r(\mathring{X}\times\mathring{X}))$ the boundary map associated to \eqref{AESsing}, up to suspension isomorphism.
\begin{theorem}[Poincar\'{e} duality \cite{DLduality,DLR}]\label{pdsing}
	Let ${}^{S}X$ be a Thom-Mather stratified space, then there exists a KK-equivalence 
	$$ KK(\CC,C^*_r(T^{NC}_\varphi X))\to KK(C({}^{S}X),\CC).$$
\end{theorem}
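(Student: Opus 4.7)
The plan is to imitate the construction of the Dirac element from Subsection \ref{pncr} in the stratified setting and then prove the KK-equivalence by reduction to the smooth Poincaré duality of \eqref{pduality}, using induction on the depth of the stratification.

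First I would construct the stratified Dirac element explicitly. Observe that $C({}^{\mathrm{S}}X)$ pulls back to a subalgebra of $C(X)$ via the quotient map $X\to {}^{\mathrm{S}}X$, and that the functions obtained in this way are precisely those that are constant along the fibers of each $\varphi_i\colon H_i\to S_i$. Such functions sit inside the multipliers of $C^*_r(T^{NC}_\varphi X)$, and the crucial compatibility is that for every element $\gamma$ of any stratum piece $(H_j\times_{S_j}TS_j\times_{S_j}H_j)_{|X_j}$ or of $T\mathring{X}$, the range and source projected down to ${}^{\mathrm{S}}X$ coincide. Hence the formula $f\otimes\xi\mapsto r^*f\cdot\xi=s^*f\cdot\xi$ defines a $*$-homomorphism
\[
m_{(X,\varphi)}\colon C({}^{\mathrm{S}}X)\otimes C^*_r(T^{NC}_\varphi X)\to C^*_r(T^{NC}_\varphi X).
\]
Then I would set
\[
D_{({}^{\mathrm{S}}X,\varphi)}:=[m_{(X,\varphi)}]\otimes_{C^*_r(T^{NC}_\varphi X)}\partial_{(X,\varphi)}\in KK(C^*_r(T^{NC}_\varphi X)\otimes C({}^{\mathrm{S}}X),\CC),
\]
and show that Kasparov product with $D_{({}^{\mathrm{S}}X,\varphi)}$ yields the desired KK-equivalence.

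Next I would proceed by induction on the depth of ${}^{\mathrm{S}}X$. The depth-zero case is exactly Poincaré duality \eqref{pduality} for the smooth closed manifold ${}^{\mathrm{S}}X=X$, which provides the base of the induction. For the inductive step, let $H_1$ be a minimal boundary hypersurface, so $S_1$ is a closed manifold and ${}^{\mathrm{S}}X\setminus S_1$ has strictly smaller depth (after desingularizing near $H_1$). Restricting $T^{NC}_\varphi X$ to the open subset $\mathring{X}\cup \bigcup_{i>1}(\cdots)$ and to the piece coming from $H_1$ gives a short exact sequence of groupoid C*-algebras
\[
\xymatrix{0\ar[r]& C^*_r\bigl((T^{NC}_\varphi X)_{|X\setminus H_1}\bigr)\ar[r] & C^*_r(T^{NC}_\varphi X)\ar[r] & C^*_r\bigl((H_1\times_{S_1}TS_1\times_{S_1}H_1)_{ad}^{[0,1)}\bigr)\ar[r]& 0}
\]
whose associated K-theory long exact sequence can be compared, by naturality of the construction of $D_{(X,\varphi)}$ in each layer, with the Higson-Roe six-term sequence \eqref{ASESSing} broken up along the closed stratum $S_1$. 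Both inductive hypothesis (for the open piece) and classical Morita/Poincaré duality for the smooth closed manifold $S_1$ (for the closed piece) furnish KK-equivalences on the two ends, and the Five Lemma upgrades this to the middle term.

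The main obstacle, as I see it, is exactly this inductive comparison: one has to verify that the boundary maps in the two six-term sequences are compatible under the Dirac element. This amounts to tracking carefully how the blow-up construction $Blup^+_{r,s}$ interacts with Kasparov products, and to identifying the restriction of $D_{({}^{\mathrm{S}}X,\varphi)}$ to the boundary stratum with the Dirac element of the link fibration $H_1\to S_1$ tensored with the Dirac element of $S_1$. This requires a careful application of the functoriality properties of $DNC$ recalled in the excerpt and an identification of the pseudodifferential extension on $(H_1\times_{S_1}TS_1\times_{S_1}H_1)_{ad}^{[0,1)}$ with a family version of the smooth Dirac element over $S_1$; once this is done, the two sequences align and the Five Lemma concludes the induction, yielding the claimed KK-equivalence.
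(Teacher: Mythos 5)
The paper does not actually prove Theorem \ref{pdsing}: it is imported wholesale from \cite{DLduality,DLR}, and the surrounding text only records the explicit Dirac element ${}^{\mathrm{S}}D_X$ implementing the duality and remarks that the argument there is insensitive to replacing iterated fibered corners metrics by iterated edge metrics. Your construction of $D_{({}^{\mathrm{S}}X,\varphi)}$ coincides with the paper's ${}^{\mathrm{S}}D_X$ (your $m_{(X,\varphi)}$ is exactly the morphism ${}^{\mathrm{S}}m$ built from the quotient map $q\colon X_\partial\to{}^{\mathrm{S}}X$, and your observation that $r^*f=s^*f$ for $f$ pulled back from ${}^{\mathrm{S}}X$ is the correct justification that it is well defined), and your overall architecture --- induction on the depth, a stratum-by-stratum decomposition of $C^*_r(T^{NC}_\varphi X)$, comparison of six-term sequences, Five Lemma --- is indeed how the cited references proceed. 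So you are reconstructing the external proof rather than anything in this paper.

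As written, though, the proposal contains one misidentification and one genuine gap. The misidentification: the sequence you should compare with is not the Higson--Roe sequence \eqref{ASESSing}, which involves the $D^*/C^*$-algebras and is the subject of Theorem \ref{main-stratified}, but the K-homology six-term sequence of the pair, i.e.\ the one induced by $0\to C_0({}^{\mathrm{S}}X\setminus S_1)\to C({}^{\mathrm{S}}X)\to C(S_1)\to 0$. The gap: essentially all of the content of the theorem sits in the step you defer as ``the main obstacle.'' In particular, the quotient piece $C^*_r\bigl(((H_1\times_{S_1}TS_1\times_{S_1}H_1)_{|X_1})_{ad}^{[0,1)}\bigr)$ is \emph{not} handled by ``classical Morita/Poincar\'e duality for $S_1$'': the half-open adiabatic deformation over $[0,1)$ is not $KK$-equivalent to the algebra of its Lie algebroid (its evaluation at $0$ has kernel a suspension of $C^*_r(H_1\times_{S_1}TS_1\times_{S_1}H_1)$, not a contractible algebra, which is precisely why the adiabatic exact sequence \eqref{AES} is nontrivial), so its $K$-duality with $C(S_1)$ --- or, in the isolated case, with the cone point --- is itself the central computation of \cite{DLisolated,DLduality}. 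Likewise the compatibility of the two boundary maps under the layerwise Dirac elements must be proved, not asserted. Until those two points are established, the Five Lemma cannot be applied, so what you have is a correct plan (matching the literature) rather than a proof.
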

Denote by $q\colon X_{\partial}\to {}^{\mathrm{S}}X$ the obvious quotient map.
	Recall that for a Lie groupoid $G$, the algebra $C(G^{(0)})$ is in the center of the multiplier algebra of $C^*_r(G)$.
	Let ${}^{\mathrm{S}}m\colon C^*_r(T^{NC}_\varphi X)\otimes C({}^{S}X)\to C^*_r(T^{NC}_\varphi X) $ be the well-defined morphism 
	$\xi\otimes f\mapsto \xi\cdot q^*f$.
	Then the \emph{Dirac element}
	$${}^{\mathrm{S}}D_X:=[{}^{\mathrm{S}}m]\otimes_{\tiny{ C^*_r(T^{NC}_\varphi X)}}\partial_{(X,\varphi)}\in KK( C^*_r(T^{NC}_\varphi X)\otimes C({}^{S}X), \CC)$$
	implements, by Kasparov product, the  Poincar\'{e} duality. 
\begin{remark}
Observe that  the Lie algebroid $\mathfrak{A}G(X,\varphi)$ of $G(X,\varphi)$ is non-canonically isomorphic to $TX$ and the anchor map $\mathfrak{A}G(X,\varphi)\to TX$ is an isomorphism over $\mathring{X}$ and it is the projection onto the kernel of $d\varphi_i$ over $H_i$. In particular we have that the continuous sections of $\mathfrak{A}G(X,\varphi)$ are given by the Lie subalgebra of vector fields  over $X$
$$\mathcal{V}_{{\rm e}} (X)=\{\xi\in \mathcal{V}_b (X)\;\;\xi |_{H_i}\,,\;\;\text{is tangent to the fibers of }\;
\varphi_i:H_i\to S_i  \,\;\forall i\}$$
where 
$$\mathcal{V}_b (X) =\{\xi \in C^\infty(X,TX)\,\;\; ; \;\;\xi x_i\in x_i C^\infty (X) \,\forall i\}.$$
In particular a continuous metric $g_e$ for $\mathfrak{A}G(X,\varphi)$ is given by a  so-called iterated edge metric,
which is defined as $\rho^2 g$, where $g$ is iie-metric and the conformal factor is given by $\rho\in C^\infty(X)$, the product of all the boundary defining functions $x_i$, with $i\in \{1,\dots, k\}$. 

It is worth to point out that the proof of Poincar\'{e} duality in \cite{DLR} takes place in the context of \emph{iterated fibered corners} metrics which are associated to a slightly different Lie groupoid: as a set is the same, whereas the smooth structure is different. Neverthless the proof of Theorem \ref{pdsing} goes exactly in the same way if we use iterated edge metrics. In particular observe that the  proof of Poincar\'{e} duality  in \cite{DLduality}, which corresponds up to KK-equivalence to the one in \cite{DLR}, does not depend on the metric we choose to put on ${}^{S}X$.
\end{remark}
\subsection{The main theorem: the stratified case    }

Let $\widetilde{X}$ be as in Remark \ref{coverings} and let us denote $\widetilde{X}\setminus\partial\widetilde{X}$ by $\widetilde{X}^{reg}$.
Consider the Lie groupoid  $G(\widetilde{X},\widetilde{\varphi})\rightrightarrows\widetilde{X}$  given by 
$$
\widetilde{X}^{reg}\times\widetilde{X}^{reg}\cup \bigsqcup_{j=0}^k (\widetilde{H}_j\times_{\widetilde{S}_j}T\widetilde{S}_j\times_{\widetilde{S}_j}\widetilde{H}_j)_{|\widetilde{X}_j}
$$
and observe that there is a proper and free action of $\Gamma$ on $G(\widetilde{X},\widetilde{\varphi})$ through groupoid automorphisms:  let $g$ be an element of $\Gamma$ and $(x,y)$ is in  $\widetilde{X}^{reg} \times\widetilde{X}^{reg}$ , then $g\cdot(x,y)=(g\cdot x,g\cdot y)$; if instead $(x,\xi,y,t)$ is an element over the boundary, then $g\cdot(x,\xi,y,t)= (g\cdot x,dg\cdot\xi,g\cdot y,t)$. 

\begin{definition}
	Define the groupoid $\mathcal{G}\rightrightarrows X$ as the quotient of  $G(\widetilde{X},\widetilde{\varphi})\rightrightarrows\widetilde{X}$  by the action of $\Gamma$.
	As a set, it is given by 
	$$
	\widetilde{X}^{reg}\times_{\Gamma}\widetilde{X}^{reg}\cup\bigsqcup_{j=0}^k (H_j\times_{S_j}TS_j\times_{S_j}H_j)_{|X_j}.
	$$
\end{definition}

Consequently we have the following exact sequence of C*-algebras
$$
\xymatrix{0\ar[r]& C^*_r(	\widetilde{X}^{reg}\times_{\Gamma}\widetilde{X}^{reg}\times(0,1))\ar[r]& C^*_r(\mathcal{G}_{ad}^{[0,1)})\ar[r] &C^*_r(T^{NC}_\varphi X)\ar[r]& 0}.
$$
Notice that $C^*_r(	\widetilde{X}^{reg}\times_{\Gamma}\widetilde{X}^{reg}\times(0,1)\rtimes\RR_+)$ is a subalgebra of
$\mathcal{H}':= L^2(\widetilde{X}^{reg}\times(0,1),g')$ where we endow $\widetilde{X}$ with a complete iterated edge metric $g'$.
Through the multiplication by the total boundary function $\rho$ we get an isomorphism $m(\rho)\colon \mathcal{H}\to \mathcal{H}'$ with $\mathcal{H}:= L^2(\widetilde{X}^{reg}\times(0,1),g)$, where $g:= \rho^{-2}g'$ is a iie-metric. 
It is a $\Gamma$-equivariant $C_0({}^{S}\widetilde{X})$-module and one can immediately see that the conjugation by $m(\rho)$ maps
$C^*_r(	\widetilde{X}^{reg}\times_{\Gamma}\widetilde{X}^{reg}\times(0,1)\rtimes\RR_+)$  isomorphically onto  $C^*({}^{S}\widetilde{X})^\Gamma$. As in Section \ref{first} this isomorphism extends to an injective map ${}^{S}\iota\colon C^*_r(\mathcal{G}_{ga}^{[0,1)})\to \mathbb{B}(\mathcal{H})$.
\begin{lemma}
	The image of ${}^{S}\iota$ is contained in $D^*({}^{S}\widetilde{X})^\Gamma$.
\end{lemma}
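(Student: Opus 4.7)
The plan is to argue just as in Lemma \ref{mappa}. I need to verify that every operator ${}^{S}\iota(\xi)$ with $\xi \in C^{*}_{r}(\mathcal{G}_{ga}^{[0,1)})$ has both finite propagation (with respect to the iie-metric $g$ on $\widetilde{X}^{reg}\times(0,1)$) and is pseudolocal against any $f \in C_{0}({}^{S}\widetilde{X})$. Finite propagation should come for free: compactly supported sections of $\mathcal{G}_{ga}^{[0,1)}$ admit $\Gamma$-equivariant lifts to properly supported kernels on $G(\widetilde{X},\widetilde{\varphi})_{ga}^{[0,1)}$, and conjugation by $m(\rho)$ preserves supports, so finite propagation in $g'$ transfers to finite propagation in $g$.

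For pseudolocality, I would follow Lemma \ref{mappa} nearly verbatim. Taking $\widetilde{\xi}$ to be the $\Gamma$-equivariant lift of $\xi$, the analogue of Remark \ref{rmk-iota} yields
\begin{equation*}
\bigl[{}^{S}\iota(\xi),\,f\bigr] \;=\; \widetilde{\iota}\bigl((r^{*}f - s^{*}f)\cdot \widetilde{\xi}\bigr),
\end{equation*}
where $r,s$ are the range and source maps of $G(\widetilde{X},\widetilde{\varphi})_{ga}^{[0,1)}$. The central point is that $r^{*}f - s^{*}f$ vanishes on two separate parts of the groupoid. First, at the adiabatic parameter $t=0$ the fibre collapses to $\mathfrak{A}G(\widetilde{X},\widetilde{\varphi})$, where $r$ and $s$ both equal the bundle projection. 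Second, and this is the genuinely new input compared with the smooth case, on each boundary piece $\widetilde{H}_{j}\times_{\widetilde{S}_{j}} T\widetilde{S}_{j}\times_{\widetilde{S}_{j}}\widetilde{H}_{j}$ both $r$ and $s$ project through $\widetilde{\varphi}_{j}$ to the same point of $\widetilde{S}_{j}$, while any $f \in C_{0}({}^{S}\widetilde{X})$ lifts to a function on $\widetilde{X}$ which is constant along the fibres of $\widetilde{\varphi}_{j}$ (since ${}^{S}\widetilde{X} = \widetilde{X}/{\sim}$ collapses precisely those fibres).

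From these two vanishings I will conclude that $(r^{*}f - s^{*}f)\widetilde{\xi}$ lies in the essential ideal
\begin{equation*}
C^{*}_{r}\bigl(\widetilde{X}^{reg}\times\widetilde{X}^{reg}\times(0,1)\bigr)\rtimes\RR^{*}_{+} \;\subset\; C^{*}_{r}\bigl(G(\widetilde{X},\widetilde{\varphi})_{ga}^{[0,1)}\bigr).
\end{equation*}
Since $C^{*}_{r}(\widetilde{X}^{reg}\times\widetilde{X}^{reg}) \cong \KK(L^{2}(\widetilde{X}^{reg}))$ and $C_{0}(0,1)\rtimes\RR^{*}_{+} \cong \KK(L^{2}(0,1))$, this ideal maps under $\widetilde{\iota}$ into $\KK(\mathcal{H})$, hence the commutator is compact; passing to the C*-closure extends the conclusion to all $\xi \in C^{*}_{r}(\mathcal{G}_{ga}^{[0,1)})$ by norm continuity. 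The only real obstacle I foresee is checking the second vanishing carefully, i.e.\ making sure that the boundary strata of $\mathcal{G}$ interact with pull-backs from $C_{0}({}^{S}\widetilde{X})$ in the way claimed; once that is established the argument is a direct adaptation of the smooth case.
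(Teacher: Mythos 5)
Your proposal is correct and follows essentially the same route as the paper: the one genuinely new input in the stratified case is exactly the point you isolate, namely that a function pulled back from ${}^{S}\widetilde{X}$ is constant along the fibres of $\widetilde{\varphi}_j$, so that $r^*f=s^*f$ holds not only at $t=0$ but also on the boundary strata $\widetilde{H}_j\times_{\widetilde{S}_j}T\widetilde{S}_j\times_{\widetilde{S}_j}\widetilde{H}_j$, forcing the commutator $\bigl[{}^{S}\iota(\xi),f\bigr]=\widetilde{\iota}\bigl((r^*f-s^*f)\widetilde{\xi}\bigr)$ to vanish on the singular part and hence to land in the ideal of compact operators. The paper is no more detailed than you are on the finite-propagation side, so your write-up matches its level of rigor.
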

\begin{proof}
	The proof is similar to the one of Lemma \ref{mappa}. The only additional thing to point out is that the commutator of 
	$f\in C_0({}^{S}\widetilde{X})$ and an element of ${}^{S}\iota\left(C^*_r(\mathcal{G}_{ga}^{[0,1)})\right)$ is zero on the singular part, which is a necessary condition for being locally compact.
Recall, from the discussion in Remark \ref{rmk-iota}, that we can see $C^*_r(\mathcal{G}_{ga}^{[0,1)})$ as a subalgebra in the multiplier algebra of $C^*_r(G(\widetilde{X},\widetilde{\varphi}))$ generated by properly supported $\Gamma$-equivariant elements. Let $\widetilde{q}\circ pr\colon \widetilde{X}\times[0,1)\to {}^{S}\widetilde{X}$ the composition of the projection to $\widetilde{X}$ and the quotient map. Then
it follows that $\widetilde{q}^*f$ is constant along the fibers of $\varphi_i$ for all $i= 1,\dots, k$ and this implies that $r^*(\widetilde{q}\circ pr)^*f= s^*(\widetilde{q}\circ pr)^*f$ is constant on $G(\widetilde{X},\widetilde{\varphi})_{|\partial \widetilde{X}}$. Consequently $[f,{}^{S}\iota (x)]$ is zero on the singular part of ${}^{S}\widetilde{X}$ and then locally compact.
\end{proof}

Now we are able to state the main result of this section whose
proof follows exactly the proof of Theorem \ref{th1}.
 
 \begin{theorem}\label{main-stratified}
 There exists a commutative diagram 
 	\begin{equation}\label{iotasing}
 	\xymatrix{0\ar[r]&C_r^*(	\widetilde{X}^{reg}\times_{\Gamma}\widetilde{X}^{reg})\otimes\KK\ar[r]\ar[d]^{{}^{S}\iota}& C^*_r(\mathcal{G}_{ga}^{[0,1)})\ar[r]\ar[d] ^{{}^{S}\iota}& C^*_r(T^{NC}_\varphi X\rtimes\RR^*_+)\ar[r]\ar[d]^{{}^{S}\iota}&0\\
 		0\ar[r]&C^*({}^{S}\widetilde{X})^\Gamma\ar[r] & D^*({}^{S}\widetilde{X})^\Gamma\ar[r]& D^*({}^{S}\widetilde{X})^\Gamma/C^*({}^{S}\widetilde{X})^\Gamma\ar[r]&0}
 	\end{equation}
 such that the vertical arrows induce isomorphisms in K-theory.
 \end{theorem}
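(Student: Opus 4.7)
The plan is to mimic the proof of Theorem \ref{th1} step by step, substituting the stratified Poincaré duality of Theorem \ref{pdsing} for the smooth Poincaré duality \eqref{pduality}. First I would note that the left vertical arrow ${}^{S}\iota\colon C^*_r(\widetilde{X}^{reg}\times_{\Gamma}\widetilde{X}^{reg})\otimes\KK\to C^*({}^{S}\widetilde{X})^\Gamma$ is an isomorphism by its very construction as conjugation by $m(\rho)$, as explained before the theorem statement. Hence by the Five Lemma the problem reduces to showing that the rightmost vertical arrow ${}^{S}\iota\colon C^*_r(T^{NC}_{\varphi}X\rtimes\RR^*_+)\to D^*({}^{S}\widetilde{X})^\Gamma/C^*({}^{S}\widetilde{X})^\Gamma$ induces an isomorphism on K-theory. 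Through the evident stratified analog of \eqref{isoD/C} and Paschke duality, this reduces further to proving that $\mathcal{P}\circ[{}^{S}\iota]$ is a KK-equivalence.

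Next, I would represent Paschke duality by the asymptotic morphism $\mu^*(u)$ of \eqref{paschke} and pull it back along ${}^{S}\iota\otimes\mathrm{id}_{C({}^{S}X)}$. The key point is that this pull-back factors as $\bar\mu^{*}\,{}^{S}\iota^*(u)$, where
\[
\bar\mu\colon C^*_r(T^{NC}_{\varphi}X\rtimes\RR^*_+)\otimes C({}^{S}X)\longrightarrow C^*_r(T^{NC}_{\varphi}X\rtimes\RR^*_+),\qquad \xi\otimes f\longmapsto \xi\cdot q^*f,
\]
is a well-defined $*$-homomorphism. The verification mirrors the centrality argument already used in the proof of the stratified version of Lemma \ref{mappa}: since $q\colon X_\partial\to {}^{S}X$ identifies points lying in the same fiber of each $\varphi_i$, the pullback $q^*f$ is constant along those fibers, so $r^*(q^*f)=s^*(q^*f)$ on the boundary part of $\mathcal{G}$, making $q^*f$ central in the relevant multiplier algebra.

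Finally, I would invoke the Thom-Connes isomorphism \eqref{TC} applied to the adiabatic exact sequence \eqref{AESsing} of $\mathcal{G}$. Under $TC$, the class ${}^{S}\iota^*(u)$ becomes the suspension of the boundary map $\partial_{(X,\varphi)}$, and $[\bar\mu]$ becomes the multiplication $[{}^{S}m]$ appearing in the definition of the Dirac element preceding Theorem \ref{pdsing}. Their Kasparov product is precisely ${}^{S}D_X=[{}^{S}m]\otimes_{C^*_r(T^{NC}_{\varphi}X)}\partial_{(X,\varphi)}$, which implements the stratified Poincaré duality of Theorem \ref{pdsing} and is therefore a KK-equivalence. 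Combining everything yields
\[
[{}^{S}\iota]=\mathcal{P}^{-1}\circ TC^{-1}\circ {}^{S}D_X\circ S^{-1},
\]
so $[{}^{S}\iota]$ is an isomorphism and the Five Lemma applied to \eqref{iotasing} completes the proof. The main obstacle I anticipate is the centrality/factorization step for $\bar\mu$ on stratified spaces: all the naturality and multiplier-algebra manipulations from the smooth case rely crucially on $q^*f$ being invariant under the groupoid action near the singular strata, and it is here that the iterated fibration structure on the boundary must be used carefully; once this is settled, the argument runs formally parallel to the proof of Theorem \ref{th1}.
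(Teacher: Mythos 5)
Your proposal is correct and takes essentially the same approach as the paper: the paper's own proof consists of the single remark that the argument ``follows exactly the proof of Theorem \ref{th1},'' and your write-up is precisely the intended adaptation, substituting the stratified Paschke duality, the quotient map $q\colon X_\partial\to{}^{\mathrm{S}}X$ and the Dirac element ${}^{\mathrm{S}}D_X$ of Theorem \ref{pdsing} for their smooth counterparts. Your identification of the centrality of $q^*f$ near the singular strata as the one point requiring genuine care matches exactly the content of the stratified analogue of Lemma \ref{mappa} that the paper proves just before the theorem.
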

 
 \begin{remark}
 Let us  highlight that one can also follow the second approach in Section \ref{second}, since  
 Lemma \ref{isopdo} holds also for $\mathcal{G}$.
More precisely we obtain that the analogous of the middle column of \eqref{algebras}  is given by
 \begin{equation}
 \xymatrix{0\ar[r]& C^*_r(\mathcal{G}_{| X^{reg}})\otimes C_0(0,1)\ar[r]& \mathcal{C}(C(X)\xrightarrow{\mathfrak{m}} \Psi^0(\mathcal{G}))\ar[r]& \mathcal{C}(C(X)\xrightarrow{\mathfrak{m}}\Sigma_{nc}(X))\ar[r]&0}
\end{equation}
where $\Sigma_{nc}(X):=\Psi^0(\mathcal{G})/C^*_r(\mathcal{G}_{| X^{reg}})$ is the C*-algebra of non-commutative symbols.
This C*-algebra is given by the following pull-back.
\begin{equation}
\xymatrix{
	\Sigma_{nc}(X)\ar[r]\ar[d]&\Psi^0((\mathcal{G}_{|\partial X})_{ad}^{[0,1)})\ar[d]\\
		C(\mathfrak{S}^*\mathcal{G})\ar[r] &
	C(\mathfrak{S}^*\mathcal{G}_{|\partial X})
	}
\end{equation} 
 	\end{remark}
 	
 \begin{remark}
 	Notice that
 	 Theorem \ref{main-stratified} reveals the correspondence between K-theoretic invariants associated to incomplete metrics and conformally correspondent complete metrics. Indeed  in the first row complete metrics are used to define the C*-algebras, whereas in the second row the metrics are incomplete.
 \end{remark}
\section{Comparing secondary invariants}
In this section we shall prove that the isomorphism $K_{*+1}(D^*(\widetilde{X})^{\Gamma})\cong K_{*}(C^*_r(G^{[0,1)}_{ad}))$, induced by \eqref{algebras},  put in correspondence the $\varrho$-classes associated to  a metric $g$ with positive scalar curvature, defined in \cite{PS1} and \cite{Zadiabatic}. 

Let  $\widetilde{X}$ be a smooth spin manifold with a free, proper and isometric action of $\Gamma$.  Let $\slashed{S}$ denote the spinor bundle over $\widetilde{X}$. Let $g$ be a $\Gamma$-invariant  complete metric on $\widetilde{X}$
and assume that the scalar curvature of $g$ is positive everywhere on $\widetilde{X}$.
The Lichnerowicz formula implies that the dirac operator $\slashed{D}$ associated to $g$ is invertible. 

Denote by $\chi\colon \RR\to \RR$ the sign function and by $\psi\colon \RR\to \RR$ the chopping function $t\mapsto \frac{t}{\sqrt{1+t^2}}$.
There is a continuous path of functions $\psi_s\colon t\mapsto \psi(\frac{t}{1-s})$ such that $\psi_0=\psi$ and $\psi_1=\chi$ (actually it is a continuous path of continuous functions on $\RR\setminus \{0\}$).
\subsection{Coarse invariants}
Let us recall the definition of the  $\varrho$-classes of Piazza and Schick in \cite{PS1}.

\begin{definition}
	Let $\dim(\widetilde{X})$ be odd.
	Since $\slashed{D}$ is invertible, the operator $\chi(\slashed{D})$ is a symmetry in $D^*(\widetilde{X})^\Gamma$. Then we can define $\varrho(g)$
	as the class
	\[
	\left[\frac{1}{2}(\chi(\slashed{D})+1)\right]\in K_0(D^*(\widetilde{X})^\Gamma).
	\]
\end{definition}

Here  $D^*(\widetilde{X})^\Gamma$ is represented on the very ample $\Gamma$-equivariant $C_0(\widetilde{X})$-module $L^2(\slashed{S})\otimes l^2\NN$ and $\frac{1}{2}(\chi(\slashed{D})+1)$ is intended as the infinite matrix with $\frac{1}{2}(\chi(\slashed{D})+1)$ in the top left corner and the identity along all the diagonal.

\begin{remark}
	Notice that, in the odd dimensional case, $\frac{1}{2}(\chi(\slashed{D})+1)$ is exactly $\mathcal{P}_>$, the projection on the positive part of the spectrum of $\slashed{D}$. Consequently $\varrho(g)$ is the image of $[\mathcal{P}_>]$ through the map $K_0(\Psi^0_\Gamma(\widetilde{X}))\to K_0(D^*(\widetilde{X})^\Gamma)$.
\end{remark}

Let us now consider the even dimensional context. In this case the spinor bundle is graded by the chirality element and it splits in the following way $\slashed{S}=\slashed{S}_+\oplus \slashed{S}_-$. In turn the Dirac operator is odd with respect to the grading and it is of the following matrix form $\begin{bmatrix}
0 & \slashed{D}_+\\\slashed{D}_-&0
\end{bmatrix}$. 

Notice that, even though $\slashed{S}_+$ and $\slashed{S}_-$ are not isomorphic as smooth bundles, there exists an isometric $\Gamma$-equivariant isomorphism $u\colon \slashed{S}_-\to\slashed{S}_+$ of measurable bundle, which is given by the Clifford multiplication by any vector field whose zero set is of measure equal to zero. It induces the unitary $\Gamma$-equivariant maps $U\colon L^\infty(\slashed{S}_-)\to L^\infty(\slashed{S}_+)$ and 
$U\colon L^2(\slashed{S}_-)\to L^2(\slashed{S}_+)$.
\begin{definition}\label{even-coarse}
Let $\chi_+(\slashed{D})$ be the positive part of $\chi(\slashed{D})$. Then $\varrho(g)$ is defined by the class
\[
[U\chi_+(\slashed{D})]\in K_1(D^*(\widetilde{X})^\Gamma).
\] 
\end{definition}
Here $D^*(\widetilde{X})^\Gamma$ is represented on the $\Gamma$-equivariant $C_0(\widetilde{X})$-module $L^2(\slashed{S}_+)\otimes l^2\NN$.
Moreover notice that the definition does not depends on the choice of $U$, see \cite[Section 2B2]{PS2}.

\subsection{Adiabatic invariants}
Since $\widetilde{X}$ is spin, the Lie algebroid of the adiabatic deformation of $G=\widetilde{X}\times_\Gamma\widetilde{X}$, which is  $TX\times[0,1]$, is obviously spin. So we can consider $\slashed{D}_{ad}$, the Dirac operator of $G^{[0,1]}_{ad}$,
 defined on the $C_c^\infty(G^{[0,1]}_{ad})$-module 
$C^\infty_c(G^{[0,1]}_{ad},r^*\slashed{S}\otimes\Omega^{\frac{1}{2}})$. Let us denote by $\mathcal{E}^{[0,1]}_{ad}$ its $C^*_r(G^{[0,1]}_{ad})$-completion and let us denote by $\mathcal{E}$ its restriction at $t=1$.

As explained in Section \ref{PDO} we can consider it as a field of operators such that at $t=1$ is the $\Gamma$-equivariant Dirac operator $\slashed{D}$ of $\widetilde{X}$
and at $t=0$ is the given by the Fourier transform of its symbol, namely  by Clifford multiplication.

Notice that $\psi(\slashed{D}_{ad})$ belongs to $\Psi^0(G^{[0,1]}_{ad})$. Moreover, since the restriction of  $\slashed{D}_{ad}$ at $t=1$ is invertible, we have a continuous path of operators $\psi_s(\slashed{D})$ from $\psi(\slashed{D})$ to $\chi(\slashed{D})$.

\begin{definition}
	Define  $\varrho^{ad}(g)$ as the class in $KK^*(\CC,C^*_r(G^{[0,1]}_{ad}))$ given by the concatenation of the Kasparov bimodules $[\mathcal{E}^{[0,1]}_{ad}, \psi(\slashed{D}_{ad})]$ and $[\mathcal{E}\otimes C_0[0,1), \psi_s(\slashed{D})]$, after a suitable reparametrization.
	More precisely the Hilbert module is given by $\mathcal{E}_{ad}^{[0,1)}$, where the notation is self-explanatory, and let us denote by $\psi_{ad}^{[0,1)}(\slashed{D})$ the corresponding operator.
\end{definition}
This definition makes sense in both the odd and the even dimensional case, because the definition of KK-groups take into account the grading of the spinor bundle.

\subsection{Comparison of  $\varrho$-classes}

Let us first calculate the image of 
 of $\varrho^{ad}(g)$ into $KK^*(\CC, \Psi^0(G^{[0,1)}_{ad}))$ through the inclusion $C^*_r(G_{ad}^{[0,1)})\hookrightarrow\Psi^0(G_{ad}^{[0,1)})$. From the definition of $\mathcal{E}_{ad}^{[0,1)}$ it is easy to see that  $$\mathcal{E}_{ad}^{[0,1)}\otimes_{\Psi^0(G_{ad}^{[0,1)})}\Psi^0(G_{ad}^{[0,1)})\cong \Psi^0(G_{ad}^{[0,1)},\slashed{S}),$$ whereas the operator can be seen as unchanged.
 
We are going to treat separately the odd and the even dimensional case.
Let us start with the odd case and let us denote  by $\Lambda$ the isomorphism $K_1(SD^*(\widetilde{X})^\Gamma)\to K_1(C^*_r(G^{[0,1)}_{ad}))$.
Recall that in the odd dimensional case $\varrho(g)$ is the image of $[\mathcal{P}_>]$ through the inclusion of $\Psi^0_\Gamma(\widetilde{X})$ into $D^*(\widetilde{X})^\Gamma$.
Since the following triangle 
\[
\xymatrix{&K_1(S\Psi_\Gamma^0(\widetilde{X}))\ar[dl]\ar[dr]&\\
	           K_1(SD^*(\widetilde{X})^\Gamma)\ar[rr]^\Lambda &&K_1(\Psi^0(G^{[0,1)}_{ad}))}
\]
is obviously commutative, it is enough to compare the image of the suspension of $[\mathcal{P}_>]$ and the image of  $\varrho^{ad}(g)$ inside $K_1(\Psi^0(G^{[0,1)}_{ad}))$.
The suspension of $[\mathcal{P}_>]$ is given by the path of unitaries
$$\exp(2\pi it\mathcal{P}_>)= e^{2\pi i t}\mathcal{P}_>+(1-\mathcal{P}_>)\in C_0(0,1)\otimes \Psi_\Gamma^0(\widetilde{X},\widetilde{\slashed{S}}).$$ 
First observe that the identification 
$KK^1(\CC,A)\cong K_1(A)$
is given by the following map
$$[H,F]\mapsto [\exp(2\pi i P)]$$
 where $P=\frac{F+1}{2}$.
An easy calculation shows that $[\exp(2\pi it\mathcal{P}_>)]$ corresponds to the 
Kasparov bimodule $$[\mathcal{H}, t\chi(\slashed{D})+(t-1)]\in KK^1(\CC,C_0(0,1)\otimes \Psi_\Gamma^0(\widetilde{X}))$$
where $\mathcal{H}$ is the $C_0(0,1)\otimes \Psi_\Gamma^0(\widetilde{X})$-module $C_0(0,1)\otimes \Psi_\Gamma^0(\widetilde{X},\widetilde{\slashed{S}})$. The operator $t\chi(\slashed{D})+(t-1)$ extends to the $\Psi^0(G^{[0,1)}_{ad})$-module 
$\Psi^0(G^{[0,1)}_{ad},\slashed{S})$ and we obtain the corresponding element in $KK(\CC,\Psi^0(G^{[0,1)}_{ad}))$, obtained through the
inclusion $C_0(0,1)\otimes \Psi_\Gamma^0(\widetilde{X})\hookrightarrow\Psi^0(G^{[0,1)}_{ad})$.

Finally observe that 
 $t\chi(\slashed{D})+(t-1)$ and $\psi_{ad}^{[0,1)}(\slashed{D})$, both of them operators on $\Psi^0(G^{[0,1)}_{ad},\slashed{S})$, commute.
 Then by \cite[Lemma 11]{Sk-remarks} there is a homotopy connecting them, hence the images of $\varrho(g)$ and $\varrho^{ad}(g)$ coincide in $KK^1(\CC,\Psi^0(G^{[0,1)}_{ad}))$.
 
 \medskip 
 Let us now pass to the even dimensional case. We refer the reader to  \cite[Section 2.2, 2.3]{AAS} for a detailed account about the isomorphism between the relative K-group of a morphism and the K-theory group of its mapping cone C*-algebra.
 In this case we are going to start with the class of $K_1(D^*(\widetilde{X})^{\Gamma})$ induced by the unitary $U\chi(\slashed{D})_+$ of Definition \ref{even-coarse}.
 Following the arrows in \eqref{algebras}, we see that it induces the class $[L^\infty(\widetilde{X},\widetilde{\slashed{S}}_+),L^\infty(\widetilde{X},\widetilde{\slashed{S}}_+),U\chi(\slashed{D})_+]$ in the relative group $K_0(j)$. Using any path of unitaries from $U$ to the identity and the fact that $U^{-1}L^\infty(\widetilde{X},\widetilde{\slashed{S}}_+)=L^\infty(\widetilde{X},\widetilde{\slashed{S}}_-)$, we see that the last class is equal to $[L^\infty(\widetilde{X},\widetilde{\slashed{S}}_+),L^\infty(\widetilde{X},\widetilde{\slashed{S}}_-),\chi(\slashed{D})_+]$, which in turn is clearly the image of $[C(\widetilde{X},\widetilde{\slashed{S}}_+),C(\widetilde{X},\widetilde{\slashed{S}}_-),\chi(\slashed{D})_+]\in K_0(\mathfrak{m})$ through the second arrows in \eqref{algebras}.
 Now observe that $\mathfrak{m}$ is injective and, as explained in \cite[Section 2]{AAS}, one can easily see that the realization of our class in  $K_0(C_\mathfrak{m})$ is given by 
 \begin{equation}\label{classmc}
 \left[\begin{pmatrix}\cos^2(\frac{\pi}{2}t)1_+&\cos(\frac{\pi}{2}t)\sin(\frac{\pi}{2}t) \chi(\slashed{D})_+\\\cos(\frac{\pi}{2}t)\sin(\frac{\pi}{2}t) \chi(\slashed{D})_-&\sin^2(\frac{\pi}{2}t)1_-\end{pmatrix} \right]  -\left[\begin{pmatrix}0&0\\0&1_-\end{pmatrix} \right]\quad t\in[0,1]
 \end{equation}
where $1_{\pm}$ is the identity of the $\Psi_\Gamma^0(\widetilde{X})$-module $C(\widetilde{X},\widetilde{\slashed{S}}_\pm)\otimes_{C(X)}\Psi^0_\Gamma(\widetilde{X})=\Psi^0_\Gamma(\widetilde{X},\slashed{S}_\pm)$ and the second matrix is meant to denote the constant path.
The first term of \eqref{classmc} is obtained by conjugating $\begin{pmatrix}1_+&0\\0&0\end{pmatrix}$ with the path of invertible matrices 
\begin{equation}\label{inv1} \begin{pmatrix}\cos(\frac{\pi}{2}t)&-\sin(\frac{\pi}{2}t) \chi(\slashed{D})_+\\\sin(\frac{\pi}{2}t) \chi(\slashed{D})_-&\cos(\frac{\pi}{2}t)\end{pmatrix}
\end{equation}
 and
the last path of invertible martices is homotopic, through paths of invertible elements, to 
\begin{equation}\label{inv2} \begin{pmatrix}\cos^2(\frac{\pi}{2}t)&-\sin(\frac{\pi}{2}t) \chi(\slashed{D})_+\\\sin(\frac{\pi}{2}t) \chi(\slashed{D})_-&\cos^2(\frac{\pi}{2}t)\end{pmatrix}.
\end{equation}

Now conjugating $\begin{pmatrix}1_+&0\\0&0\end{pmatrix}$ by \eqref{inv2} instead of \eqref{inv1}, we obtain  exactly the image of $\left[\Psi^0_\Gamma(\widetilde{X},\slashed{S}_+)\oplus\Psi^0_\Gamma(\widetilde{X},\slashed{S}_-),F\right]$  by means of the standard identification of $KK(\CC,C_\mathfrak{m})$ and $K_0(C_\mathfrak{m})$,
here the operator in the Kasparov bimodule is given by $$F=\begin{pmatrix}0& -\sin(\frac{\pi}{2}t)\chi(\slashed{D})_+\\ \sin(\frac{\pi}{2}t)\chi(\slashed{D})_-&0\end{pmatrix}. $$

Finally if we move to $KK(\CC,\Psi^0(G^{[0,1)}_{ad}))$ through the map $\eta$ from Lemma \ref{isopdo}, we obtain the class $[\Psi^0(G_{ad}^{[0,1)},\slashed{S}),F]$
which is, by  \cite[Lemma 11]{Sk-remarks}, operatorially homotopic to the image of $\varrho^{ad}(g)$ in $KK(\CC,\Psi^0(G^{[0,1)}_{ad}))$. 
Observe that this is true because the identity is a compact operator on the module $\Psi^0(G_{ad}^{[0,1]})$. Summarizing we have proved the following result.
\begin{theorem}
	The classes $\varrho(g)$ and $\varrho^{ad}(g)$ correspond to each other through the isomorphism  induced by \eqref{algebras}.
\end{theorem}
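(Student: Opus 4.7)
The plan is to compare the images of $\varrho(g)$ and $\varrho^{ad}(g)$ inside $KK_*(\CC,\Psi^0(G_{ad}^{[0,1)}))$, where we land via the natural inclusion $C^*_r(G_{ad}^{[0,1)}) \hookrightarrow \Psi^0(G_{ad}^{[0,1)})$ and via the zig-zag of \eqref{algebras} respectively. Since \eqref{isoad} and Lemma \ref{isopdo} identify all the relevant groups, it suffices to verify agreement in this common group. The argument naturally splits according to the parity of $\dim \widetilde{X}$.

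In the odd-dimensional case, the class $\varrho(g) = [\mathcal{P}_>]$ already comes from $K_0(\Psi^0_\Gamma(\widetilde{X}))$, so its image in $K_1(\Psi^0(G_{ad}^{[0,1)}))$ factors through $K_1(S\Psi^0_\Gamma(\widetilde{X}))$ and may be computed explicitly via the identification $KK^1(\CC,A) \cong K_1(A)$ sending $[H,F]$ to $[\exp(2\pi i\tfrac{F+1}{2})]$. I expect to represent the suspension of $[\mathcal{P}_>]$ as the Kasparov bimodule with Hilbert module $C_0(0,1)\otimes \Psi^0_\Gamma(\widetilde{X},\slashed{S})$ and operator $t\chi(\slashed{D})+(t-1)$. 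Upon pushforward to $\Psi^0(G_{ad}^{[0,1)})$ both this and $\varrho^{ad}(g) = [\Psi^0(G_{ad}^{[0,1)},\slashed{S}),\psi_{ad}^{[0,1)}(\slashed{D})]$ are carried by the same module, and the two operators commute by functional calculus. An application of Skandalis's operator-homotopy lemma from \cite{Sk-remarks} then yields the desired identification.

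For the even-dimensional case, I would trace $\varrho(g) = [U\chi_+(\slashed{D})]$ through the zig-zag. Using a path of unitaries from $U$ to the identity and the identity $U^{-1}L^\infty(\slashed{S}_+) = L^\infty(\slashed{S}_-)$, the class transforms into the relative cycle $[L^\infty(\slashed{S}_+),L^\infty(\slashed{S}_-),\chi(\slashed{D})_+] \in K_0(j)$, which lifts to $K_0(\mathfrak{m})$ on $\mathcal{C}_{\mathfrak{m}}$. The mapping-cone realization of the relative group (following \cite{AAS}) expresses this as the path of projections obtained by conjugating $\begin{pmatrix}1_+ & 0 \\ 0 & 0\end{pmatrix}$ by
\[
\begin{pmatrix} \cos(\tfrac{\pi}{2}t) & -\sin(\tfrac{\pi}{2}t)\chi(\slashed{D})_+\\ \sin(\tfrac{\pi}{2}t)\chi(\slashed{D})_- & \cos(\tfrac{\pi}{2}t) \end{pmatrix}.
\]
A homotopy through invertibles replacing the diagonal $\cos$ by $\cos^2$ matches this with the cycle conjugate to the same projection by a form whose associated Kasparov bimodule has off-diagonal operator $F$ involving $\sin(\tfrac{\pi}{2}t)\chi(\slashed{D})_\pm$. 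Applying $\eta$ from Lemma \ref{isopdo}, one lands on $[\Psi^0(G_{ad}^{[0,1)},\slashed{S}),F]$, and since $F$ commutes with $\psi_{ad}^{[0,1)}(\slashed{D})$ on this module, Skandalis's lemma once more provides the homotopy to $\varrho^{ad}(g)$.

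The main obstacle is the even-dimensional bookkeeping: verifying the explicit path-of-projections representative of the relative class, carrying out the invertible homotopy that swaps $\cos$ for $\cos^2$ without leaving the group of invertibles, and checking the technical hypothesis in \cite{Sk-remarks} (compactness of the identity endomorphism on the Hilbert module $\Psi^0(G_{ad}^{[0,1)},\slashed{S})$ viewed over itself) so that commutativity of $F$ and $\psi_{ad}^{[0,1)}(\slashed{D})$ indeed produces an operator homotopy. The odd case is more immediate because functional calculus delivers commuting operators directly, whereas here commutativity must be engineered through the choice of the invertible path.
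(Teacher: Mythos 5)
Your proposal is correct and follows essentially the same route as the paper: reduce both classes to $KK_*(\CC,\Psi^0(G_{ad}^{[0,1)}))$, treat the odd case via the suspension of $[\mathcal{P}_>]$ and the commuting-operator homotopy of \cite[Lemma 11]{Sk-remarks}, and the even case via the mapping-cone realization with the explicit $\cos$/$\sin$ conjugating paths and the $\cos\mapsto\cos^2$ homotopy before applying $\eta$ and Skandalis's lemma. The technical points you flag (compactness of the identity on $\Psi^0(G_{ad}^{[0,1)},\slashed{S})$, validity of the invertible homotopy) are exactly the ones the paper addresses.
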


\addcontentsline{toc}{section}{References}
\bibliographystyle{plain}
\nocite{*}
\bibliography{CoarseGrpd}

\begin{thebibliography}{10}

\bibitem{ALMP1}
Pierre Albin, \'{E}ric Leichtnam, Rafe Mazzeo, and Paolo Piazza.
\newblock The signature package on {W}itt spaces.
\newblock {\em Ann. Sci. \'{E}c. Norm. Sup\'{e}r. (4)}, 45(2):241--310, 2012.

\bibitem{ALMP2}
Pierre Albin, Eric Leichtnam, Rafe Mazzeo, and Paolo Piazza.
\newblock The {N}ovikov conjecture on {C}heeger spaces.
\newblock {\em J. Noncommut. Geom.}, 11(2):451--506, 2017.

\bibitem{ALMP3}
Pierre Albin, Eric Leichtnam, Rafe Mazzeo, and Paolo Piazza.
\newblock Hodge theory on {C}heeger spaces.
\newblock {\em J. Reine Angew. Math.}, 744:29--102, 2018.

\bibitem{AP}
Pierre Albin and Paolo Piazza.
\newblock Stratified surgery and {K}-theory invariants of the signature
  operator.
\newblock arXiv:1710.00934.

\bibitem{AAS}
Paolo Antonini, Sara Azzali, and Georges Skandalis.
\newblock Flat bundles, von {N}eumann algebras and {$K$}-theory with
  {$\Bbb{R}/\Bbb{Z}$}-coefficients.
\newblock {\em J. K-Theory}, 13(2):275--303, 2014.

\bibitem{BR1}
Moulay-Tahar Benameur and Indrava Roy.
\newblock The {H}igson-{R}oe sequence for \'{e}tale groupoids. {I}. {D}ual
  algebras and compatibility with the {B}{C} map.
\newblock arXiv:1801.06040.

\bibitem{BR2}
Moulay-Tahar Benameur and Indrava Roy.
\newblock The {H}igson-{R}oe sequence for \'{e}tale groupoids. {II}. the
  universal sequence for equivariant families.
\newblock arXiv:1812.04371.

\bibitem{Connes-thom}
A.~Connes.
\newblock An analogue of the {T}hom isomorphism for crossed products of a
  {$C^{\ast} $}-algebra by an action of {${\bf R}$}.
\newblock {\em Adv. in Math.}, 39(1):31--55, 1981.

\bibitem{CS}
A.~Connes and G.~Skandalis.
\newblock The longitudinal index theorem for foliations.
\newblock {\em Publ. Res. Inst. Math. Sci.}, 20(6):1139--1183, 1984.

\bibitem{connes-book}
Alain Connes.
\newblock {\em Noncommutative geometry}.
\newblock Academic Press, Inc., San Diego, CA, 1994.

\bibitem{DLisolated}
Claire Debord and Jean-Marie Lescure.
\newblock {$K$}-duality for pseudomanifolds with isolated singularities.
\newblock {\em J. Funct. Anal.}, 219(1):109--133, 2005.

\bibitem{DLduality}
Claire Debord and Jean-Marie Lescure.
\newblock {$K$}-duality for stratified pseudomanifolds.
\newblock {\em Geom. Topol.}, 13(1):49--86, 2009.

\bibitem{DL}
Claire Debord and Jean-Marie Lescure.
\newblock Index theory and groupoids.
\newblock In {\em Geometric and topological methods for quantum field theory},
  pages 86--158. Cambridge Univ. Press, Cambridge, 2010.

\bibitem{DLR}
Claire Debord, Jean-Marie Lescure, and Fr\'{e}d\'{e}ric Rochon.
\newblock Pseudodifferential operators on manifolds with fibred corners.
\newblock {\em Ann. Inst. Fourier (Grenoble)}, 65(4):1799--1880, 2015.

\bibitem{DSblup}
Claire Debord and Georges Skandalis.
\newblock Blowup constructions for {L}ie groupoids and a {B}outet de {M}onvel
  type calculus.
\newblock arXiv:1705.09588.

\bibitem{DS}
Claire Debord and Georges Skandalis.
\newblock Adiabatic groupoid, crossed product by {$\Bbb{R}_+^\ast$} and
  pseudodifferential calculus.
\newblock {\em Adv. Math.}, 257:66--91, 2014.

\bibitem{DG2}
Robin~J. Deeley and Magnus Goffeng.
\newblock Realizing the analytic surgery group of {H}igson and {R}oe
  geometrically part {II}: relative {$\eta$}-invariants.
\newblock {\em Math. Ann.}, 366(3-4):1319--1363, 2016.

\bibitem{DG3}
Robin~J. Deeley and Magnus Goffeng.
\newblock Realizing the analytic surgery group of {H}igson and {R}oe
  geometrically part {III}: higher invariants.
\newblock {\em Math. Ann.}, 366(3-4):1513--1559, 2016.

\bibitem{DG1}
Robin~J. Deeley and Magnus Goffeng.
\newblock Realizing the analytic surgery group of {H}igson and {R}oe
  geometrically, part {I}: the geometric model.
\newblock {\em J. Homotopy Relat. Struct.}, 12(1):109--142, 2017.

\bibitem{FS}
Thierry Fack and Georges Skandalis.
\newblock Connes' analogue of the {T}hom isomorphism for the {K}asparov groups.
\newblock {\em Invent. Math.}, 64(1):7--14, 1981.

\bibitem{HR1}
Nigel Higson and John Roe.
\newblock Mapping surgery to analysis. {I}. {A}nalytic signatures.
\newblock {\em $K$-Theory}, 33(4):277--299, 2005.

\bibitem{HR2}
Nigel Higson and John Roe.
\newblock Mapping surgery to analysis. {II}. {G}eometric signatures.
\newblock {\em $K$-Theory}, 33(4):301--324, 2005.

\bibitem{HR3}
Nigel Higson and John Roe.
\newblock Mapping surgery to analysis. {III}. {E}xact sequences.
\newblock {\em $K$-Theory}, 33(4):325--346, 2005.

\bibitem{HS}
Michel Hilsum and Georges Skandalis.
\newblock Morphismes {$K$}-orient\'{e}s d'espaces de feuilles et
  fonctorialit\'{e} en th\'{e}orie de {K}asparov (d'apr\`es une conjecture
  d'{A}. {C}onnes).
\newblock {\em Ann. Sci. \'{E}cole Norm. Sup. (4)}, 20(3):325--390, 1987.

\bibitem{Kasparov}
G.~G. Kasparov.
\newblock Equivariant {$KK$}-theory and the {N}ovikov conjecture.
\newblock {\em Invent. Math.}, 91(1):147--201, 1988.

\bibitem{Legall}
Pierre-Yves Le~Gall.
\newblock Th\'{e}orie de {K}asparov \'{e}quivariante et groupo\"{i}des. {I}.
\newblock {\em $K$-Theory}, 16(4):361--390, 1999.

\bibitem{Mather}
John Mather.
\newblock Notes on topological stability.
\newblock {\em Bull. Amer. Math. Soc. (N.S.)}, 49(4):475--506, 2012.

\bibitem{MP}
Bertrand Monthubert and Fran\c{c}ois Pierrot.
\newblock Indice analytique et groupo\"{i}des de {L}ie.
\newblock {\em C. R. Acad. Sci. Paris S\'{e}r. I Math.}, 325(2):193--198, 1997.

\bibitem{NWX}
Victor Nistor, Alan Weinstein, and Ping Xu.
\newblock Pseudodifferential operators on differential groupoids.
\newblock {\em Pacific J. Math.}, 189(1):117--152, 1999.

\bibitem{Pflaum}
Markus~J. Pflaum.
\newblock {\em Analytic and geometric study of stratified spaces}, volume 1768
  of {\em Lecture Notes in Mathematics}.
\newblock Springer-Verlag, Berlin, 2001.

\bibitem{PS1}
Paolo Piazza and Thomas Schick.
\newblock Rho-classes, index theory and {S}tolz' positive scalar curvature
  sequence.
\newblock {\em J. Topol.}, 7(4):965--1004, 2014.

\bibitem{PS2}
Paolo Piazza and Thomas Schick.
\newblock The surgery exact sequence, {K}-theory and the signature operator.
\newblock {\em Ann. K-Theory}, 1(2):109--154, 2016.

\bibitem{PZ}
Paolo Piazza and Vito~Felice Zenobi.
\newblock Singular spaces, groupoids and metrics of positive scalar curvature.
\newblock {\em J. Geom. Phys.}, 137:87--123, 2019.

\bibitem{QR}
Yu~Qiao and John Roe.
\newblock On the localization algebra of {G}uoliang {Y}u.
\newblock {\em Forum Math.}, 22(4):657--665, 2010.

\bibitem{Roeassembly}
John Roe.
\newblock Comparing analytic assembly maps.
\newblock {\em Q. J. Math.}, 53(2):241--248, 2002.

\bibitem{Sk-remarks}
Georges Skandalis.
\newblock Some remarks on {K}asparov theory.
\newblock {\em J. Funct. Anal.}, 56(3):337--347, 1984.

\bibitem{JLT}
Jean~Louis Tu.
\newblock La conjecture de {N}ovikov pour les feuilletages hyperboliques.
\newblock {\em $K$-Theory}, 16(2):129--184, 1999.

\bibitem{Vassout}
St\'{e}phane Vassout.
\newblock Unbounded pseudodifferential calculus on {L}ie groupoids.
\newblock {\em J. Funct. Anal.}, 236(1):161--200, 2006.

\bibitem{WXY}
Shmuel Weinberger, Zhizhang Xie, and Guoliang Yu.
\newblock Additivity of higher rho invariants and nonrigidity of topological
  manifolds.
\newblock Preprint 2016. arXiv 1608.03661.

\bibitem{XY1}
Zhizhang Xie and Guoliang Yu.
\newblock Positive scalar curvature, higher rho invariants and localization
  algebras.
\newblock {\em Adv. Math.}, 262:823--866, 2014.

\bibitem{XY2}
Zhizhang Xie and Guoliang Yu.
\newblock Higher rho invariants and the moduli space of positive scalar
  curvature metrics.
\newblock {\em Adv. Math.}, 307:1046--1069, 2017.

\bibitem{Yu}
Guoliang Yu.
\newblock Localization algebras and the coarse {B}aum-{C}onnes conjecture.
\newblock {\em $K$-Theory}, 11(4):307--318, 1997.

\bibitem{Z}
Rudolf Zeidler.
\newblock Positive scalar curvature and product formulas for secondary index
  invariants.
\newblock {\em J. Topol.}, 9(3):687--724, 2016.

\bibitem{Zadiabatic}
Vito~Felice Zenobi.
\newblock Adiabatic groupoids and secondary invariants in {K}-theory.
\newblock arXiv:1609.08015.

\bibitem{zenobi}
Vito~Felice Zenobi.
\newblock Mapping the surgery exact sequence for topological manifolds to
  analysis.
\newblock {\em J. Topol. Anal.}, 9(2):329--361, 2017.

\end{thebibliography}

\end{document}